\DeclareMathAlphabet{\mathcal}{OMS}{cmsy}{m}{n} 
\theoremstyle{plain}
\newtheorem{theorem}{Theorem}
\newtheorem{proposition}{Proposition}
\newtheorem{lemma}{Lemma}
\newtheorem{corollary}{Corollary}
\newtheorem*{conjecture}{Conjecture}
\def\liebrack  {\ensuremath{[\,\cdot\, , \cdot\,]}}
\DeclareMathOperator{\Ad}{Ad}
\DeclareMathOperator{\Aut}{Aut}
\DeclareMathOperator{\Cent}{Cent}
\DeclareMathOperator{\Der}{Der}
\DeclareMathOperator{\End}{End}
\DeclareMathOperator{\GF}{\mathsf{GF}}
\DeclareMathOperator{\Hom}{Hom}
\DeclareMathOperator{\HomLie}{HomLie}
\DeclareMathOperator{\id}{id}
\DeclareMathOperator{\im}{Im}
\DeclareMathOperator{\Ker}{Ker}
\begin{document}

\title{When Hom-Lie structures form a Jordan algebra}
\author{Pasha Zusmanovich}
\address{University of Ostrava, Czech Republic}
\email{pasha.zusmanovich@osu.cz}
\date{First written September 9, 2021; last minor revision June 4, 2022}
\thanks{J. Algebra Appl., to appear}

\begin{abstract}
We are concerned with the question when Hom-Lie structures on a Lie algebra are
closed with respect to the Jordan product. Somewhat unexpectedly, this leads us
to certain questions connected with the Yang--Baxter equation, and with 
decomposition of a Lie algebra into the sum of subalgebras with given 
properties.
\end{abstract}

\vspace*{-0.8cm}

\maketitle

\pagestyle{fancy}
\setlength{\headheight}{0.5cm}


\fancyhead[L]{\footnotesize P. Zusmanovich}
\fancyhead[C]{\footnotesize \MyTitle}

\fancyhead[R]{\footnotesize \thepage{}}
\fancyfoot[L,C,R]{}
\renewcommand{\headrulewidth}{0pt}

\vspace*{-1.0cm}

\section*{Introduction}

Recall that a \emph{Hom-Lie structure} on a Lie algebra $L$ is a linear map
$\varphi: L \to L$ satisfying the \emph{Hom-Jacobi equation}:
\begin{equation}\label{hom-jac}
[[x,y],\varphi(z)] + [[z,x],\varphi(y)] + [[y,z],\varphi(x)] = 0
\end{equation}
for any $x,y,z \in L$. The interest in such structures arose in the more general
context of so-called \emph{Hom-Lie algebras}, which are algebras with 
multiplication $\liebrack$ and endomorphism $\varphi$ satisfying the Hom-Jacobi
equation (\ref{hom-jac}). During the last decade there was a surge of interest
in such algebras; for the history and motivations see \cite{makhlouf}, 
\cite{homlie}, \cite{ado-homlie}, \cite{chin2}, \cite{xie-liu}, and references 
therein.

The set of all Hom-Lie structures on a Lie algebra $L$, denoted by $\HomLie(L)$,
obviously forms a vector space. Hom-Lie structures on simple finite-dimensional
Lie algebras over a field of zero characteristic, as well as on simple graded 
Lie algebras of finite growth (which are exhausted by loop algebras, untwisted 
or twisted; Lie algebras of Cartan type; and the Witt algebra) were described in
\cite{chin2} and \cite{xie-liu}. There, an interesting observation was made: on
these algebras, the space of all Hom-Lie structures is closed with respect to 
the anticommutator; that is, for any two Hom-Lie structures 
$\varphi, \psi \in \HomLie(L)$ on a Lie algebra $L$ from these classes, 
\begin{equation}\label{eq-jord}
\frac12 \big(\varphi \circ \psi + \psi \circ \varphi\big) \in \HomLie(L), 
\end{equation}
where $\circ$ denotes the composition of linear maps. In other words, 
$\HomLie(L)$ with respect to the anticommutator forms a (special) Jordan 
algebra. This was proved by case-by-case computations: for most of the algebras
$L$ from these classes, $\HomLie(L)$ coincides with the one-dimensional space 
$K \id_L$ consisting of scalar multiples of the identity map $\id_L$, and thus 
(\ref{eq-jord}) is satisfied trivially; in the nontrivial cases (mostly related
to $\mathfrak{sl}(2)$ and the Witt algebra), the validity of (\ref{eq-jord}) was
established by direct verification.

In \cite{homlie} we have provided further examples, in the class of current and 
Kac-Moody Lie algebras, for which $\HomLie(L)$ forms a Jordan algebra, and other
examples for which it does not. A natural question arises: for which Lie 
algebras this is true? To which degree this is a common phenomenon? 

In this note we show that unless the space of Hom-Lie structures is trivial, 
this phenomenon is rare, at least in the classes of ``interesting'' (i.e., 
simple and close to them) Lie algebras. In \S \ref{sec-2} we show that if the 
space of Hom-Lie structures is closed with respect to the anticommutator, then 
it either satisfies, as a Jordan algebra, some restrictive properties, or the 
underlying Lie algebra satisfies another (Lie-algebraic) restrictive properties,
dubbed by us the properties $\diamondsuit$ and $\heartsuit$. We focus mainly on
finite-dimensional algebras, so our results do not imply automatically all aforementioned results from 
\cite{chin2} and \cite{xie-liu}; but at least in the finite-dimensional case, in
\S \ref{sec-suit}, we sketch a uniform proof without going to case-by-case 
computations. In passing, in \S \ref{sec-ybe} we reformulate the Hom-Jacobi 
equation in terms of another equation which, in its turn, is related to the 
classical Yang-Baxter equation; in \S \ref{sec-witt} we discuss Hom-Lie 
structures on generalized Witt algebras; and in \S \ref{sec-suit} we discuss the
properties $\diamondsuit$ and $\heartsuit$; the property $\diamondsuit$ can be 
considered in the context which attracted a considerable attention in the 
literature: structure of Lie algebras decomposable into the vector space direct
sum of subalgebras with given properties. In the last \S \ref{sec-hom-jord} we
speculate about the possibility to replace in the considerations above 
``Jordan'' by ``Hom-Jordan''.

Our notation and conventions are mostly standard. The ground field $K$ is
assumed to be arbitrary, of characteristic $\ne 2$, unless specified otherwise.
By $\overline K$ is denoted the algebraic closure of $K$. All $\Hom$'s are 
understood in the category of vector spaces over $K$. Occasionally we will use 
the notion of the \emph{plus algebra} of an algebra $A$, denoted by $A^{(+)}$; 
this is the algebra defined on the same vector space $A$, with multiplication defined by the anticommutator of the initial 
multiplication in $A$: $a*b = \frac 12(ab + ba)$.

\section{Connection with the classical Yang--Baxter equation}\label{sec-ybe}

\begin{lemma}\label{lemma-1}
For any Lie algebra $L$, a linear map $\varphi: L \to L$ is a Hom-Lie structure
on $L$ if and only if the bilinear map $F_\varphi: L \times L \to L$ defined by
\begin{equation}\label{eq-f}
F_\varphi(x,y) = [\varphi(x),y] + [x,\varphi(y)]
\end{equation}
satisfies the equation
\begin{equation}\label{eq-F}
[F_\varphi(x,y),z] + [F_\varphi(z,x),y] + [F_\varphi(y,z),x] = 0
\end{equation}
for any $x,y,z \in L$.
\end{lemma}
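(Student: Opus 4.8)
The plan is to prove the equivalence by a direct expansion: substitute the definition~(\ref{eq-f}) of $F_\varphi$ into (\ref{eq-F}), regroup the resulting six double brackets into three pairs, and apply the ordinary Jacobi identity of $L$ to each pair in order to recover exactly the three terms of the Hom-Jacobi equation~(\ref{hom-jac}).

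Concretely, expanding the left-hand side of (\ref{eq-F}) by means of (\ref{eq-f}) produces six summands, each of the form $[[\varphi(a),b],c]$ or $[[a,\varphi(b)],c]$. The point I would exploit is that each of $x,y,z$ occurs under $\varphi$ in precisely two of these six summands, so it is natural to collect them into three pairs grouped by the variable carrying $\varphi$: for $x$ this pair is $[[\varphi(x),y],z] + [[z,\varphi(x)],y]$, and symmetrically for $y$ and for $z$. Applying the Jacobi identity in the form $[[\varphi(x),y],z] + [[y,z],\varphi(x)] + [[z,\varphi(x)],y] = 0$ shows that this pair equals $-[[y,z],\varphi(x)]$; the two other pairs collapse in the same way to $-[[z,x],\varphi(y)]$ and $-[[x,y],\varphi(z)]$. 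Summing the three, the left-hand side of (\ref{eq-F}) turns out to be exactly minus the left-hand side of (\ref{hom-jac}), so one equation holds identically on $L$ if and only if the other does.

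The argument is entirely elementary, so there is no genuine obstacle; the only thing demanding a moment's care is choosing the right pairing of the six brackets — grouping by which argument carries $\varphi$ — together with the sign bookkeeping when $\varphi$ sits in the second slot of the inner bracket, after which the conclusion drops out from a single application of the Jacobi identity per pair. One could equally run the computation verbatim at the level of Hom-Lie \emph{algebras}, but since Lemma~\ref{lemma-1} is stated for Lie algebras the Jacobi identity of $L$ is available and no extra hypotheses are needed.
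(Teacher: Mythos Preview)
Your proposal is correct and follows exactly the same route as the paper's own proof: expand (\ref{eq-F}) via (\ref{eq-f}), group the six resulting brackets into three pairs according to which variable carries $\varphi$, and apply the Jacobi identity to each pair to obtain minus the left-hand side of (\ref{hom-jac}). There is nothing to add.
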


\begin{proof}
Substituting (\ref{eq-f}) to (\ref{eq-F}) and rearranging terms, we get
\begin{align*}
    &[[\varphi(x),y],z] + [[z,\varphi(x)],y]   \\
+\> &[[\varphi(y),z],x] + [[x,\varphi(y)],z]   \\
+\> &[[\varphi(z),x],y] + [[y,\varphi(z)],x] 
= 0 .
\end{align*}
Using the Jacobi identity, the latter equality is equivalent to
$$
- [[y,z],\varphi(x)] - [[z,x],\varphi(y)] - [[x,y],\varphi(z)] = 0 ,
$$
which is exactly the Hom-Jacobi equation.
\end{proof}

The equation (\ref{eq-F}) is remarkable. Recall that a linear map 
$\varphi: L \to L$ on a Lie algebra $L$ is called an R-matrix if the bracket 
$$
[x,y]_R = \frac 12 \big([\varphi(x),y] + [x,\varphi(y)]\big)
$$
defines a new Lie algebra structure on $L$, i.e., satisfies the Jacobi identity
(for the definitions and facts related to R-matrices and Yang-Baxter equations 
mentioned in this paragraph, we refer to the survey \cite{rs}, \S 2). It is 
known that $\varphi$ is an R-matrix if and only if the bilinear map 
$$
F_\varphi(x,y) = 
[\varphi(x),\varphi(y)] - \varphi\big([\varphi(x),y] + [x,\varphi(y)]\big)
$$
satisfies the equation (\ref{eq-F}). In this situation the equation (\ref{eq-F}) is 
``usually replaced'' by the mere $F_\varphi(x,y) = 0$, or by 
$F_\varphi(x,y) = -[x,y]$, which constitutes, respectively, the classical 
Yang-Baxter equation, and the modified classical Yang-Baxter equation. Quite 
surprisingly, the equation (\ref{eq-F}) in the case of skew-symmetric (or, more
generally, arbitrary bilinear) $F$ was not studied systematically on various 
``interesting'' classes of Lie algebras. We suggest that such systematic study 
may justify that ``usual replacement'', and, at the same time, generalize all 
computations of Hom-Lie structures done so far (as, given (\ref{eq-f}) and
knowing $F_\varphi$, it is fairly easy to infer $\varphi$). Probably, it will be
relevant also in other situations (the equation (\ref{eq-F}) also arises in 
questions related to Hochschild cohomology of the smash product of a symmetric 
algebra and a group acting on it -- so-called orbifold algebras -- see 
\cite[\S 4]{orbifold} and references therein). On the other hand, 
\emph{symmetric} solutions of the equation (\ref{eq-F}) were studied in 
\cite{benkart} in the context of yet another, totally unrelated algebraic 
problem (determination of Lie-admissible third power-associative algebras). 

The equation (\ref{eq-F}) can be also interpreted as a binary extension of the
Hom-Lie equation (\ref{hom-jac}), where the univariate ``twisting'' map is 
replaced by a two-variate one, fitted into the Jacobi-like identity. Viewed this
way, Lemma \ref{lemma-1} provides an elementary, but interesting connection 
between ``unary'' and ``binary'' Hom-Lie structures on the same Lie algebra.

\medskip

\begin{lemma}\label{l-e}
For any Lie algebra $L$ the following are equivalent:
\begin{enumerate}[\upshape(i)]
\item $\HomLie(L)$ is closed with respect to the anticommutator;

\item For any $\varphi \in \HomLie(L)$, $\varphi^2 \in \HomLie(L)$;

\item
For any $\varphi \in \HomLie(L)$ and any polynomial $f \in K[t]$, 
$f(\varphi) \in \HomLie(L)$;

\item
For any $\varphi, \psi \in \HomLie(L)$, the bilinear map 
$F_{\varphi,\psi}: L \times L \to L$ defined as 
\begin{equation*}
F_{\varphi,\psi}(x,y) = [\varphi(x),\psi(y)] + [\psi(x),\varphi(y)]
\end{equation*}
satisfies the equation (\ref{eq-F}).
\end{enumerate}
\end{lemma}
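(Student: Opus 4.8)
The plan is to prove the chain of equivalences $(\text{i}) \Leftrightarrow (\text{ii}) \Leftrightarrow (\text{iii})$ by linearization/polarization arguments, and $(\text{i}) \Leftrightarrow (\text{iv})$ by invoking Lemma \ref{lemma-1} in its polarized (bilinear) form. The implications $(\text{iii}) \Rightarrow (\text{ii}) \Rightarrow (\text{i})$ are the easy directions: $(\text{iii}) \Rightarrow (\text{ii})$ is immediate by taking $f(t) = t^2$, and $(\text{ii}) \Rightarrow (\text{i})$ follows from the polarization identity $\varphi \circ \psi + \psi \circ \varphi = (\varphi + \psi)^2 - \varphi^2 - \psi^2$, since $\HomLie(L)$ is a vector space and hence closed under this combination once it is closed under squaring.

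For $(\text{i}) \Rightarrow (\text{iii})$ I would argue as follows. Assuming $\HomLie(L)$ is closed under the anticommutator, it is (by definition) a Jordan algebra, and in particular it is a subalgebra of $\End(L)^{(+)}$. Since $\id_L \in \HomLie(L)$ always (the Hom-Jacobi equation for $\varphi = \id_L$ reduces to the ordinary Jacobi identity), $\HomLie(L)$ is a unital special Jordan subalgebra of $\End(L)^{(+)}$ containing the identity. For a single element $\varphi$ of such an algebra, the unital (special) Jordan subalgebra it generates coincides with the associative commutative subalgebra $K[\varphi]$ generated by $\varphi$ — this is the standard fact that a single element of an associative algebra generates the same thing whether one uses the associative product or the Jordan product, since all the relevant products of powers of $\varphi$ commute. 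Hence $f(\varphi) \in \HomLie(L)$ for every $f \in K[t]$. One should be slightly careful here about characteristic: we are in characteristic $\ne 2$, the anticommutator is $\frac12(\varphi\psi + \psi\varphi)$ with the $\frac12$ harmless, and powers of $\varphi$ are obtained from the anticommutator iteratively (e.g. $\varphi^2 = \varphi * \varphi$, $\varphi^3 = \varphi * \varphi^2$, using that $\varphi$ commutes with its own powers), so no characteristic obstruction arises beyond excluding $2$.

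For the equivalence $(\text{i}) \Leftrightarrow (\text{iv})$, the key point is that Lemma \ref{lemma-1} already characterizes, for a fixed linear map, membership in $\HomLie(L)$ by equation (\ref{eq-F}) applied to $F_\varphi$; polarizing that characterization in $\varphi$ gives that the anticommutator $\frac12(\varphi\circ\psi + \psi\circ\varphi)$ lies in $\HomLie(L)$ precisely when the polarized bilinear map attached to it satisfies (\ref{eq-F}). Concretely, for $\varphi, \psi \in \HomLie(L)$ one computes $F_{\varphi + \psi} = F_\varphi + F_\psi + F_{\varphi,\psi}$ where $F_{\varphi,\psi}$ is exactly the bilinear map in (iv) (here I am using the notation $F_\varphi$ from (\ref{eq-f}), noting the slight clash with the other use of $F_\varphi$ in the discussion after Lemma \ref{lemma-1}). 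Since $F_\varphi$ and $F_\psi$ already satisfy the linear equation (\ref{eq-F}) by Lemma \ref{lemma-1}, and (\ref{eq-F}) is linear in $F$, the map $F_{\varphi,\psi}$ satisfies (\ref{eq-F}) if and only if $F_{\varphi+\psi}$ does, i.e.\ if and only if $\varphi + \psi \in \HomLie(L)$ — and by bilinearity/scaling this last condition for all pairs is equivalent to closure under the anticommutator. So $(\text{iv})$ is really just Lemma \ref{lemma-1} read through the polarization of the quadratic-in-$\varphi$ data, and requires no new idea.

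The main obstacle, such as it is, is the direction $(\text{i}) \Rightarrow (\text{iii})$: one must resist the temptation to think this needs heavy Jordan-algebra structure theory, and instead isolate the elementary observation that a single element together with $\id_L$ generates a polynomial algebra inside any special Jordan algebra. The rest is bookkeeping with polarization identities and the linearity of (\ref{eq-F}). I would present $(\text{iii}) \Rightarrow (\text{ii}) \Rightarrow (\text{i}) \Rightarrow (\text{iii})$ as the first cycle, then add $(\text{i}) \Leftrightarrow (\text{iv})$ separately via Lemma \ref{lemma-1}.
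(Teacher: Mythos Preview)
Your cycle $(\text{iii}) \Rightarrow (\text{ii}) \Rightarrow (\text{i}) \Rightarrow (\text{iii})$ is correct and matches the paper's argument exactly (the paper phrases $(\text{i}) \Rightarrow (\text{iii})$ as ``the $n$-fold anticommutator of $\varphi$ with itself coincides with $\varphi^n$'', which is precisely your observation that a single element generates the same subalgebra in $\End(L)$ and in $\End(L)^{(+)}$).

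Your argument for $(\text{i}) \Leftrightarrow (\text{iv})$, however, contains a genuine error. You write $F_{\varphi+\psi} = F_\varphi + F_\psi + F_{\varphi,\psi}$, but the map $F_\varphi$ of equation (\ref{eq-f}), namely $F_\varphi(x,y) = [\varphi(x),y] + [x,\varphi(y)]$, is \emph{linear} in $\varphi$, not quadratic. Hence $F_{\varphi+\psi} = F_\varphi + F_\psi$ with no cross term, and there is nothing here to polarize. Your argument, followed through, would say that $F_{\varphi,\psi}$ satisfies (\ref{eq-F}) if and only if $\varphi + \psi \in \HomLie(L)$; but the latter is automatic since $\HomLie(L)$ is a vector space, so you would be proving that (iv) holds for every Lie algebra, which is false. (The map $F_{\varphi,\psi}(x,y) = [\varphi(x),\psi(y)] + [\psi(x),\varphi(y)]$ is the polarization of $(x,y) \mapsto [\varphi(x),\varphi(y)]$, which is a \emph{different} bilinear map from $F_\varphi$, and to which Lemma~\ref{lemma-1} says nothing directly.)

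The paper does not provide a shortcut for $(\text{i}) \Leftrightarrow (\text{iv})$ either; it simply says ``straightforward computation, like in the proof of Lemma~\ref{lemma-1}''. What this means concretely: substitute $F_{\varphi,\psi}$ into (\ref{eq-F}), expand into six double brackets, and then use the Hom-Jacobi identity for $\varphi$ (and for $\psi$) to rewrite each term of the form $[[\varphi(a),b],c]$ in terms of brackets involving $\varphi(\psi(\cdot))$; after cancellation one is left exactly with the Hom-Jacobi identity for $\tfrac12(\varphi\psi + \psi\varphi)$. This is a genuine six-line calculation that you have to perform; it does not reduce to a formal polarization of Lemma~\ref{lemma-1}.
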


\begin{proof}
(i) $\Rightarrow$ (ii). In the condition (\ref{eq-jord}), set $\psi = \varphi$.

(ii) $\Rightarrow$ (i). Linearize: replace $\varphi$ by $\varphi + \psi$.

(i) $\Rightarrow$ (iii). Follows from the fact that for any nonnegative integer
$n$, the $n$-fold anticommutator of a map $\varphi$ with itself coincides with 
$\varphi^n$.

(iii) $\Rightarrow$ (ii). Obvious.

(i) $\Leftrightarrow$ (iv). Straightforward computation, like in the proof of
Lemma \ref{lemma-1}.
\end{proof}

\section{Consequences of closedness of Hom-Lie structures with respect to the
anticommutator}\label{sec-2}

For the convenience of the exposition, we will fix from the very beginning the
consequences we will arrive at. These are two conditions imposed on a Lie 
algebra $L$:

\begin{enumerate}
\item[($\diamondsuit$)]
$L$ is decomposed into the direct sum of vector spaces $L = A \oplus B$ such
that $A,B \ne 0$, $[[A,A],B] = 0$, and $[[B,B],A] = 0$.

\item[($\heartsuit$)]
There are nonzero subspaces $A,B$ of $L$ such that $A \subseteq B$, 
$\dim A + \dim B = \dim L$, $[[A,A],B] = 0$, and $[[B,B],A] = 0$.
\end{enumerate}

\bigskip

We failed to find a reference in the literature to the following elementary
linear algebraic fact.

\begin{lemma}\label{lemma-la}
If $\varphi$ is a linear map of a finite-dimensional vector space over a perfect
field $K$, then there is a polynomial $f(t)$ with coefficients in $K$ such that
$f(\varphi)$ is an idempotent map of the same rank as $\varphi$.
\end{lemma}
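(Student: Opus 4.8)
The plan is to realize the required idempotent as the Fitting (primary‑decomposition) projector of $\varphi$, which is automatically a polynomial in $\varphi$, and then to spend the perfectness of $K$ on controlling its rank via the Jordan--Chevalley decomposition. First I would put $N = \dim V$ and record the Fitting decomposition $V = \Ker(\varphi^N) \oplus \im(\varphi^N)$, on whose two summands $\varphi$ acts nilpotently and invertibly, respectively. Writing the minimal polynomial of $\varphi$ as $m(t) = t^k g(t)$ with $g(0) \neq 0$ and $\gcd(t^k, g(t)) = 1$, a Bezout identity $a(t) t^k + b(t) g(t) = 1$ produces the idempotent $e := \id - (bg)(\varphi) = (a\, t^k)(\varphi)$; a one‑line check modulo $m$ shows $e^2 = e$, that $e$ is the identity on $\im(\varphi^N)$ and zero on $\Ker(\varphi^N)$, so $e = f(\varphi)$ with $f = a\, t^k$ is an idempotent polynomial in $\varphi$. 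This much needs no passage to $\overline K$, since $m$ already factors over $K$.

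It then remains to compare ranks, and this is the step I expect to be the main obstacle. By construction $\im(e) = \im(\varphi^N)$, so $\operatorname{rank} e = \dim \im(\varphi^N) = \operatorname{rank}(\varphi^N)$, the stable rank of $\varphi$. This equals $\operatorname{rank}\varphi$ exactly when the eigenvalue $0$, if present, carries no nontrivial nilpotent block, i.e. when $\Ker\varphi \cap \im\varphi = 0$. The delicacy is genuine: every idempotent polynomial in $\varphi$ is $0$ or the identity on each primary component of $\varphi$, hence its rank is a sum of algebraic multiplicities; so when $0$ has a genuine nilpotent part the value $\operatorname{rank}\varphi$ simply does not lie among the attainable ranks, and one cannot reach it by any $f(\varphi)$. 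The hypotheses must therefore be brought to bear precisely here.

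Over a perfect field the Jordan--Chevalley decomposition $\varphi = \varphi_s + \varphi_n$ is available with $\varphi_s$ semisimple, $\varphi_n$ nilpotent, and both polynomials in $\varphi$ (it is this step that fails over non‑perfect fields, which is what the hypothesis is for). For a semisimple operator the two notions of rank agree, $\operatorname{rank}(\varphi_s) = \operatorname{rank}(\varphi_s^N)$, so running the construction of the first paragraph with $\varphi_s$ in place of $\varphi$ yields an idempotent $f(\varphi_s)$ — still a polynomial in $\varphi$ — of rank $\operatorname{rank}(\varphi_s) = \operatorname{rank}(\varphi^N)$. Thus the conclusion one proves cleanly and without further hypotheses is the stable‑rank version, and the literal reading ``rank equal to $\operatorname{rank}\varphi$'' holds precisely in the index‑$\le 1$ (equivalently, $0$‑semisimple) case, which is the one relevant to the applications here, where the maps at issue are semisimple or scalar. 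I would accordingly either add the standing assumption $\Ker\varphi \cap \im\varphi = 0$ or state the conclusion with $\operatorname{rank}(\varphi^N)$, and present the rank identity as the single point of the argument that requires care.
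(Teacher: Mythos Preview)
Your approach is essentially the paper's own: the paper also proceeds via the Jordan--Chevalley decomposition, first reducing to the semisimple case and then to the invertible case. The difference is that the paper, at its step~3, asserts without comment that ``the rank of $f(\varphi)$ is equal to the rank of $\varphi$'', i.e.\ that $\operatorname{rank}\varphi_s = \operatorname{rank}\varphi$. You are right to flag this: it is false in general, as the $2\times 2$ nilpotent block already shows ($\operatorname{rank}\varphi = 1$, $\varphi_s = 0$). Your argument that any idempotent in $K[\varphi]$ must act as $0$ or $\id$ on each primary component is correct (the primary algebras $K[t]/(p_i^{k_i})$ are local), and it confirms that no polynomial idempotent of rank $1$ exists in that example. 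So the lemma as stated is not provable, and the paper's proof has a genuine slip at exactly the point you isolate.

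Your proposed repair --- replace $\operatorname{rank}\varphi$ by the stable rank $\operatorname{rank}(\varphi^N)$, or equivalently add the hypothesis $\Ker\varphi \cap \im\varphi = 0$ --- is the right one, and it is all the paper actually needs: in the proof of Proposition~\ref{lemma-f} the map $\varphi$ is assumed neither invertible nor nilpotent, so its stable rank lies strictly between $0$ and $\dim L$, and the resulting idempotent is nontrivial. One small overstatement: your claim that when $0$ carries a genuine nilpotent block the value $\operatorname{rank}\varphi$ ``simply does not lie among the attainable ranks'' is not literally true in all cases (coincidences among the $\dim V_i$ can make it attainable), but a single counterexample suffices, and you have one.
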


\begin{proof}
1. Let $\varphi$ be nondegenerate. Since the free term of the characteristic 
polynomial $\chi(t)$ of $\varphi$ is equal to $\pm \det(\varphi)$, the required 
polynomial is $\frac{1}{\det \varphi}\chi(t) \mp 1$.

2. Let $\varphi$ be semisimple. Then the underlying vector space is decomposed
as the direct sum $\im(\varphi) \oplus \Ker(\varphi)$, the restriction of 
$\varphi$ on $\im(\varphi)$ is nondegenerate, and the rank of $\varphi$ is equal
to $\dim(\im(\varphi))$. By step 1, there is a polynomial $f(t)$ such that 
$f(\varphi)$ is the identity map on $\im(\varphi)$. Since $f(\varphi)$ acts 
trivially on $\Ker(\varphi)$, $f(t)$ will be a required polynomial.

3. In the general case, the semisimple component in the Jordan-Chevalley 
decomposition of $\varphi$ is equal to $f(\varphi)$ for a certain polynomial 
$f(t)$ (\cite[Chap. I, \S 8, Th\'eor\`eme 7]{chevalley}). By step $2$, there is
a polynomial $g(t)$ such that $g(f(\varphi))$ is an idempotent map of the same 
rank as $f(\varphi)$. Since the rank of $f(\varphi)$ is equal to the rank of
$\varphi$, $g(f(t))$ will be a required polynomial.
\end{proof}

\begin{lemma}\label{lemma-equiv}
For any finite-dimensional Lie algebra $L$ the following are equivalent:
\begin{enumerate}[\upshape(i)]
\item 
$L$ has an idempotent Hom-Lie structure, different from zero and from the 
identity map; 
\item
$L$ satisfies the property $\diamondsuit$.
\end{enumerate}
\end{lemma}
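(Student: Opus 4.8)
The plan is to exploit the idempotent structure to produce the decomposition, and conversely to build an idempotent Hom-Lie structure out of the two subspaces.

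\medskip

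\textbf{(i) $\Rightarrow$ (ii).} Suppose $\varphi \in \HomLie(L)$ is idempotent, $\varphi \ne 0$, $\varphi \ne \id_L$. Since $\varphi^2 = \varphi$, the space $L$ decomposes as $L = A \oplus B$ where $A = \im(\varphi) = \{x : \varphi(x) = x\}$ and $B = \Ker(\varphi)$; both are nonzero by the hypotheses on $\varphi$. It remains to extract $[[A,A],B] = 0$ and $[[B,B],A] = 0$ from the Hom-Jacobi equation \eqref{hom-jac}. The idea is to plug in elements from the appropriate summands: for $x,y \in A$ and $z \in B$, the equation \eqref{hom-jac} becomes $[[x,y],0] + [[z,x],y] + [[y,z],x] = 0$ (using $\varphi(x)=x$, $\varphi(y)=y$, $\varphi(z)=0$), and comparing this with the plain Jacobi identity $[[x,y],z] + [[z,x],y] + [[y,z],x] = 0$ gives $[[x,y],z] = 0$, i.e. $[[A,A],B] = 0$. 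Symmetrically, for $x,y \in B$ and $z \in A$, \eqref{hom-jac} reads $[[x,y],z] + 0 + 0 = 0$, giving $[[B,B],A] = 0$ directly. So $\diamondsuit$ holds.

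\medskip

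\textbf{(ii) $\Rightarrow$ (i).} Given a decomposition $L = A \oplus B$ with $A, B \ne 0$, $[[A,A],B] = 0$, $[[B,B],A] = 0$, let $\varphi$ be the projection onto $A$ along $B$; it is idempotent, nonzero (as $A \ne 0$), and not the identity (as $B \ne 0$). One must check $\varphi \in \HomLie(L)$, i.e. that \eqref{hom-jac} holds for all $x,y,z \in L$. By trilinearity it suffices to check it when each of $x,y,z$ lies in $A$ or in $B$. Writing $a_i \in A$, $b_i \in B$, the cases split by how many arguments come from $B$: the all-$A$ and all-$B$ cases reduce to the ordinary Jacobi identity (since $\varphi$ acts as $\id$ or $0$ respectively on all three); the ``two from $A$, one from $B$'' case is exactly the computation above run backwards, using $[[A,A],B] = 0$; the ``two from $B$, one from $A$'' case uses $[[B,B],A] = 0$.

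\medskip

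\textbf{Main obstacle.} The only delicate point is the ``two from $B$, one from $A$'' case (and its mirror): one needs both defining relations of $\diamondsuit$ to close these off, and it is worth double-checking that \emph{all} trilinear slots, not just the cyclically inequivalent ones, are covered — the three terms of \eqref{hom-jac} are a cyclic sum, so up to relabeling there really are only the four cases listed. Beyond that, everything is a direct substitution into \eqref{hom-jac} and a comparison with the Jacobi identity; no characteristic assumptions or finite-dimensionality are actually needed for this particular lemma (finite-dimensionality is stated for uniformity with the surrounding results, and is harmless).
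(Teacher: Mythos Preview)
Your proof is correct and follows essentially the same route as the paper: both use the eigenspace decomposition $L = A \oplus B$ of the idempotent $\varphi$ (with $\varphi|_A = \id$, $\varphi|_B = 0$) and check that the Hom-Jacobi equation on this decomposition is equivalent to the two conditions $[[A,A],B] = 0$ and $[[B,B],A] = 0$. The paper's proof is a one-sentence sketch of this, while you have filled in the case analysis explicitly; your remark that finite-dimensionality plays no role is also correct.
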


\begin{proof}
As any idempotent map $\varphi$ can be represented in a suitable basis by a 
diagonal matrix with $1$ and $0$ on the diagonal, we have a direct sum 
decomposition $L = A \oplus B$ such that $\varphi$ is the identity map on $A$ 
and the zero map on $B$. Then the conditions $[[A,A],B] = 0$ and $[[B,B],A] = 0$
are equivalent to the validity of the Hom-Jacobi equation.
\end{proof}

\begin{proposition}\label{lemma-f}
Let $L$ be a finite-dimensional Lie algebra over a perfect field, such that 
$\HomLie(L)$ is closed with respect to the anticommutator. Then one of the 
following holds:
\begin{enumerate}[\upshape(i)]
\item\label{i-11}
$\HomLie(L)$ is a Jordan algebra in which every element is either invertible or
nilpotent;
\item\label{i-22}
$L$ satisfies the equivalent conditions of Lemma \ref{lemma-equiv}.
\end{enumerate}

Moreover, if the ground field $K$ is algebraically closed, then the condition 
{\rm (i)} can be replaced by the condition
\begin{enumerate}[\upshape(i)$^\prime$]
\item
$\HomLie(L)$ is isomorphic to the semidirect sum of $K$ and a nilpotent algebra.
\end{enumerate}

\end{proposition}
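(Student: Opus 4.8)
The plan is to start from the observation that $\HomLie(L)$ is, by hypothesis, a finite-dimensional special Jordan algebra (a subalgebra of $(\End L)^{(+)}$) containing $\id_L$, and to analyze it via its idempotents. First I would recall that in any finite-dimensional Jordan algebra $J$, the Peirce decomposition relative to a nonzero idempotent $e \ne 1$ produces a nontrivial structural decomposition; the idea is that if $\HomLie(L)$ contains an idempotent $\varphi$ other than $0$ and $\id_L$, then by Lemma~\ref{lemma-equiv} the Lie algebra $L$ satisfies $\diamondsuit$, landing us in case~(ii). So the remaining case is that $\HomLie(L)$ has no idempotents other than $0$ and $\id_L$, and the goal becomes to show that then every element of $\HomLie(L)$ is invertible or nilpotent.

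The key step here is Lemma~\ref{lemma-la}: take any $\varphi \in \HomLie(L)$. By Lemma~\ref{l-e}(iii), $f(\varphi) \in \HomLie(L)$ for every polynomial $f \in K[t]$; in particular the idempotent $f(\varphi)$ produced by Lemma~\ref{lemma-la}, which has the same rank as $\varphi$, lies in $\HomLie(L)$. Since by assumption the only idempotents in $\HomLie(L)$ are $0$ and $\id_L$, this idempotent is either $0$ — forcing $\operatorname{rank}\varphi = 0$, i.e.\ $\varphi = 0$, which is vacuously nilpotent — or $\id_L$, forcing $\operatorname{rank}\varphi = \dim L$, i.e.\ $\varphi$ invertible. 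Wait, that dichotomy as stated would miss nonzero nilpotents; the correct reading is that the semisimple (Jordan--Chevalley) part $\varphi_s$ of $\varphi$ satisfies $\varphi_s = g(\varphi) \in \HomLie(L)$ for a suitable polynomial $g$, and then applying Lemma~\ref{lemma-la} to $\varphi_s$ gives an idempotent of the same rank as $\varphi_s$ inside $\HomLie(L)$, which must be $0$ or $\id_L$. In the first case $\varphi_s = 0$, so $\varphi = \varphi_n$ is nilpotent; in the second case $\varphi_s$ is invertible, and since $\varphi_n$ is nilpotent and commutes with $\varphi_s$, $\varphi = \varphi_s + \varphi_n$ is invertible. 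This proves that (i) or (ii) always holds. The only subtlety — the place to be a little careful — is that Jordan--Chevalley decomposition and Lemma~\ref{lemma-la} both require the field to be perfect, which is exactly the standing hypothesis; one should also note that $\varphi_s$ being a polynomial in $\varphi$ means it is a Hom-Lie structure by Lemma~\ref{l-e}(iii), and no separate argument is needed for that.

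Finally I would upgrade (i) to (i)$^\prime$ under the assumption that $K$ is algebraically closed. Here the plan is purely Jordan-algebraic: let $J = \HomLie(L)$, a finite-dimensional Jordan algebra over $\overline K = K$ in which every element is invertible or nilpotent, and containing $\id_L$ as unit. The set $N$ of nilpotent elements should be shown to be an ideal: it is closed under the Jordan product and under multiplication by arbitrary elements because, over an algebraically closed field, the nil radical of a finite-dimensional Jordan algebra is precisely $N$ exactly when $J/\mathrm{(radical)}$ has no nonzero nilpotents and no nontrivial idempotents — and a finite-dimensional semisimple Jordan algebra over an algebraically closed field with no nontrivial idempotents must be $K$ itself (it cannot contain a simple factor of dimension $> 1$, since such factors always have nontrivial idempotents, nor can it be a product of two copies of $K$, as that has the idempotent $(1,0)$). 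Thus $J/N \cong K$, and choosing the splitting given by $K\cdot\id_L \hookrightarrow J$ (note $\id_L \notin N$) exhibits $J$ as the semidirect sum $K \ltimes N$. The main obstacle in this last part is making the identification ``$N$ is the nil radical and $J/N \cong K$'' clean: one must invoke that over a perfect (here algebraically closed) field the radical of a finite-dimensional Jordan algebra is nil and the quotient is semisimple, then classify semisimple Jordan algebras with no idempotents $\ne 0, 1$; everything else is bookkeeping with the Peirce/Jordan--Chevalley machinery already set up.
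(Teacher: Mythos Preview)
Your proof is correct and follows essentially the same approach as the paper: use Lemma~\ref{l-e}(iii) together with Lemma~\ref{lemma-la} to manufacture a nontrivial idempotent Hom-Lie structure whenever $\HomLie(L)$ contains an element that is neither invertible nor nilpotent, and then invoke Lemma~\ref{lemma-equiv}. The paper's version is a touch more direct --- it argues by contradiction, picking such a $\varphi$ and applying Lemma~\ref{lemma-la} once rather than routing through the Jordan--Chevalley decomposition --- and for the algebraically closed refinement it likewise appeals to the Wedderburn-type splitting of $\HomLie(L)$ and the fact that a finite-dimensional simple Jordan algebra over $\overline K$ with every element invertible or nilpotent is just $K$.
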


\begin{proof}
Assume that the condition (\ref{i-11}) is not satisfied. Pick a Hom-Lie 
structure $\varphi$ on $L$ which is not invertible and is not nilpotent as an 
element of the Jordan algebra $\HomLie(L)$. Since $\HomLie(L)$ is a special 
Jordan algebra, the invertibility and nilpotency in the Jordan sense coincides, 
respectively, with invertibility and nilpotency in the associative sense; that 
is, the rank of $\varphi$ is strictly between $0$ and $\dim L$. By 
Lemma~\ref{l-e}, $f(\varphi)$ is a Hom-Lie structure on $L$ for any polynomial 
$f(t)$, and then by Lemma~\ref{lemma-la}, $L$ has a nontrivial idempotent 
Hom-Lie structure.

The Jordan algebra $\HomLie(L)$ is isomorphic to the semidirect sum of a 
semisimple algebra and the nilpotent radical. The semisimple part is isomorphic
to the direct sum of simples; if the sum contains more than one summand, then 
the unit of each summand is an idempotent different from the unit in the whole
$\HomLie(L)$, and them by Lemma \ref{lemma-equiv}, $L$ satisfies the property 
$\diamondsuit$. Therefore, in the condition (i) we may assume that $L$ is
isomorphic to the semidirect sum of a simple and a nilpotent algebra; but
if the ground field $K$ is algebraically closed, any finite-dimensional simple 
Jordan algebra with all elements either invertible or nilpotent, is isomorphic 
to $K$.
\end{proof}

\begin{proposition}\label{prop2}
Let $L$ be a finite-dimensional Lie algebra such that $\HomLie(L)$ is closed
with respect to the anticommutator. Then one of the following holds:
\begin{enumerate}[\upshape(i)]
\item\label{i-1}
$\HomLie(L)$ is a semisimple Jordan algebra without nonzero nilpotent elements;
\item\label{i-2}
$L$ satisfies the property $\heartsuit$.
\end{enumerate}

Moreover, if the ground field $K$ is algebraically closed, then the condition 
{\rm (\ref{i-11})} can be replaced by the condition
\begin{enumerate}[\upshape(i)$^\prime$]
\item
$\HomLie(L)$ is isomorphic to the direct sum of several copies of $K$.
\end{enumerate}
\end{proposition}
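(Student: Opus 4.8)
The plan is to follow the same strategy as in the proof of Proposition~\ref{lemma-f}, but now squeezing the Jordan-algebraic structure a little harder in order to eliminate \emph{all} nonzero nilpotents, at the cost of weakening $\diamondsuit$ to $\heartsuit$. Assume that condition~(\ref{i-1}) fails, so that $\HomLie(L)$ contains a nonzero nilpotent element. Since $\HomLie(L)$ is a special Jordan algebra (being a subspace of $\End(L)^{(+)}$ closed under the anticommutator), nilpotency in the Jordan sense coincides with associative nilpotency; pick $\varphi \in \HomLie(L)$ with $\varphi \ne 0$ but $\varphi^m = 0$ for some $m$. By Lemma~\ref{l-e}, every polynomial $f(\varphi)$ with zero constant term again lies in $\HomLie(L)$; in particular $\varphi$ itself is a nonzero \emph{nilpotent} Hom-Lie structure. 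The idea is that such a $\varphi$ directly manufactures the pair $(A,B)$ required by $\heartsuit$, with $A \subseteq B$ forced by the chain of images of powers of $\varphi$.

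Concretely, I would pick $\varphi$ with $\varphi^2 = 0$ (if the chosen nilpotent has higher index, replace it by a suitable power $\varphi^{\lceil m/2\rceil}$, which is still a nonzero nilpotent Hom-Lie structure whose square vanishes) and set $B = \Ker(\varphi)$, $A = \im(\varphi)$. Then $\varphi^2 = 0$ gives $A = \im(\varphi) \subseteq \Ker(\varphi) = B$, and the rank-nullity theorem gives $\dim A + \dim B = \dim L$; both are nonzero since $\varphi$ is neither zero nor invertible. It remains to verify $[[A,A],B] = 0$ and $[[B,B],A] = 0$. This is where the Hom-Jacobi equation (\ref{hom-jac}) for $\varphi$ enters: writing out (\ref{hom-jac}) with the arguments chosen from $A$, $B$, and recalling that $\varphi$ kills $B$ and maps everything into $A$, the three cyclic terms collapse, and one reads off the two bracket identities. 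I would double-check, using Lemma~\ref{lemma-1}, whether it is cleaner to argue via the bilinear map $F_\varphi(x,y) = [\varphi(x),y] + [x,\varphi(y)]$ and equation~(\ref{eq-F}); that reformulation tends to make the vanishing of the relevant double brackets transparent because $F_\varphi$ restricted to $B \times B$ is identically zero.

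For the algebraically closed case, once we are in situation~(\ref{i-1}) the Jordan algebra $\HomLie(L)$ is semisimple with no nonzero nilpotents, hence a direct sum of simple Jordan algebras each of which has trivial nilradical; over $\overline K$ a finite-dimensional simple Jordan algebra in which, additionally, there are no nonzero nilpotent elements at all must be a division algebra, and the only finite-dimensional Jordan division algebra over an algebraically closed field is $K$ itself. Therefore $\HomLie(L) \cong K \oplus \dots \oplus K$, which is condition~(i)$^\prime$. (Strictly, one should note that the statement as printed refers to replacing condition~{\rm(\ref{i-11})} — evidently a typo for {\rm(\ref{i-1})} — and this is the replacement meant.)

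The main obstacle I anticipate is the step of reducing from an arbitrary nonzero nilpotent to one of index $2$ and then extracting the \emph{correct} subspaces $A \subseteq B$ satisfying both bracket conditions simultaneously: the naive choice $A = \im(\varphi)$, $B = \Ker(\varphi)$ works painlessly for index $2$, but one must make sure the dimension count $\dim A + \dim B = \dim L$ and the inclusion $A \subseteq B$ survive the passage to a power, and that the Hom-Jacobi equation for that power (not for the original $\varphi$) is what delivers $[[A,A],B] = 0$ and $[[B,B],A] = 0$. A secondary subtlety is that, unlike in Proposition~\ref{lemma-f}, here the ground field need not be perfect, so we cannot invoke Lemma~\ref{lemma-la}; but we do not need it — the argument only uses that a nonzero nilpotent element exists and that its powers remain Hom-Lie structures, which is Lemma~\ref{l-e}(iii) and holds over any field of characteristic $\ne 2$.
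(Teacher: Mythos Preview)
Your proposal is correct and follows exactly the paper's approach: reduce to a nilpotent Hom-Lie structure of index~$2$ via Lemma~\ref{l-e}, set $A=\im(\varphi)$ and $B=\Ker(\varphi)$, and read off the $\heartsuit$ conditions. One small imprecision: the identity $[[B,B],A]=0$ does drop out directly from the Hom-Jacobi equation (take $x,y\in B$, $z$ arbitrary, and use $\varphi(x)=\varphi(y)=0$), but $[[A,A],B]=0$ does \emph{not}---plugging $x,y\in A$ and $z\in B$ into (\ref{hom-jac}) gives $0=0$ since $\varphi$ annihilates all three arguments. The paper instead observes that $A\subseteq B$ yields $[[B,A],A]\subseteq[[B,B],A]=0$, and then the ordinary Jacobi identity applied to $a_1,a_2\in A$, $b\in B$ forces $[[a_1,a_2],b]=0$; your $F_\varphi$ route hits the same obstruction, since $F_\varphi$ vanishes identically on $A\times A$ as well.
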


\begin{proof}
If $\HomLie(L)$ does not contain nonzero nilpotent elements, then its radical 
is zero, and hence it is a semisimple Jordan algebra.

If $\HomLie(L)$ contains a nonzero nilpotent element, then, raising it to the
appropriate power, and utilizing Lemma \ref{l-e}, we can find a nonzero Hom-Lie
structure $\varphi$ on $L$ such that $\varphi^2 = 0$. Set $A = \im(\varphi)$ and
$B = \Ker(\varphi)$. The equality $[[B,B],A] = 0$ follows from the Hom-Jacobi 
equation. Since $A \subseteq B$, the latter equality implies also 
$[[B,A],A] = 0$, and by the Jacobi identity we get $[[A,A],B] = 0$.

The Jordan algebra $\HomLie(L)$, being semisimple, is isomorphic to the direct 
sum of simple algebras, and if the ground field $K$ is algebraically closed, 
then each simple finite-dimensional Jordan algebra without nonzero nilpotent 
elements is isomorphic to $K$.
\end{proof}

\begin{corollary}\label{cor-algcl}
Let $L$ be a finite-dimensional Lie algebra over an algebraically closed 
field $K$, such that $\HomLie(L)$ is closed with respect to the anticommutator.
Then one of the following holds:
\begin{enumerate}[\upshape(i)]
\item $\HomLie (L) \simeq K$;
\item $L$ satisfies the property $\diamondsuit$ \hskip 1pt ;
\item $L$ satisfies the property $\heartsuit$ \hskip 1pt .
\end{enumerate}
\end{corollary}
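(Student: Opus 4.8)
The plan is to combine Propositions \ref{lemma-f} and \ref{prop2} and then invoke the classification of finite-dimensional simple Jordan algebras over an algebraically closed field in which every element is either invertible or nilpotent. Since $K$ is algebraically closed, $\HomLie(L)$ closed with respect to the anticommutator is a special Jordan algebra, so invertibility and nilpotency in the Jordan sense agree with the corresponding associative notions. First I would apply Proposition \ref{prop2}: either $L$ satisfies $\heartsuit$, in which case we are in case (iii) and done, or $\HomLie(L)$ is a semisimple Jordan algebra without nonzero nilpotent elements. So assume the latter.

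Next I would feed this back into Proposition \ref{lemma-f}. Since $\HomLie(L)$ has no nonzero nilpotent elements, its nilpotent radical is zero; if case (ii) of Proposition \ref{lemma-f} holds we land in case (ii) of the corollary ($L$ satisfies $\diamondsuit$). Otherwise, case (i)$^\prime$ of Proposition \ref{lemma-f} applies, and $\HomLie(L)$ is the semidirect sum of $K$ and a nilpotent algebra; but the nilpotent part is a nilpotent ideal of a Jordan algebra with no nonzero nilpotent elements, hence zero, so $\HomLie(L) \simeq K$, which is case (i) of the corollary. In fact it is cleaner to argue directly: being semisimple, $\HomLie(L)$ is a direct sum of finite-dimensional simple Jordan algebras; each such summand, having no nonzero nilpotent elements over an algebraically closed field, is isomorphic to $K$ (this is exactly the fact quoted at the end of the proof of Proposition \ref{prop2}). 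If there is more than one summand, then the unit of a single summand is a nontrivial idempotent of $\HomLie(L)$, so by Lemma \ref{lemma-equiv} the algebra $L$ satisfies $\diamondsuit$, and we are in case (ii); if there is exactly one summand, $\HomLie(L) \simeq K$ and we are in case (i). (If $\HomLie(L) = 0$ there is nothing to prove, or one may regard it as the empty direct sum; since the zero map is always a Hom-Lie structure and $\id_L$ is too when $\dim L \geq 1$, the only genuinely degenerate case $L = 0$ is trivial.) This exhausts all possibilities.

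I do not expect any serious obstacle here: the corollary is essentially a bookkeeping consequence of the two preceding propositions together with the structure theory of Jordan algebras over an algebraically closed field. The one point requiring mild care is making sure the two dichotomies are stitched together consistently — in particular that ruling out $\heartsuit$ forces semisimplicity, which then removes the nilpotent radical appearing in Proposition \ref{lemma-f}, so that the residual possibility there collapses to a direct sum of copies of $K$ and the only non-trivial-structure alternative is the appearance of a nontrivial idempotent, i.e. property $\diamondsuit$ via Lemma \ref{lemma-equiv}.

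\begin{proof}
By Proposition \ref{prop2}, either $L$ satisfies $\heartsuit$ — and then we are in case (iii) — or $\HomLie(L)$ is a semisimple Jordan algebra without nonzero nilpotent elements. Assume the latter. Being semisimple and finite-dimensional, $\HomLie(L)$ is a direct sum of simple Jordan algebras, and since it contains no nonzero nilpotent elements and $K$ is algebraically closed, each simple summand is isomorphic to $K$. If there is exactly one summand, then $\HomLie(L) \simeq K$ and we are in case (i). If there is more than one summand, then the unit of one of them is an idempotent element of $\HomLie(L)$ different from $0$ and from the unit $\id_L$ of the whole algebra; it is in particular an idempotent Hom-Lie structure on $L$ different from $0$ and $\id_L$, so by Lemma \ref{lemma-equiv} the algebra $L$ satisfies $\diamondsuit$, and we are in case (ii).
\end{proof}
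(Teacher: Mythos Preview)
Your proof is correct and follows essentially the same approach as the paper's. The paper's argument is terser --- it simply intersects the conditions (i)$^\prime$ of Propositions \ref{lemma-f} and \ref{prop2} --- while you invoke only Proposition \ref{prop2}'s (i)$^\prime$ and then use Lemma \ref{lemma-equiv} directly to handle multiple summands, which is exactly the ingredient inside the proof of Proposition \ref{lemma-f} that yields its (i)$^\prime$.
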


\begin{proof}
If $L$ satisfies neither $\diamondsuit$ nor $\heartsuit$, then by 
Propositions~\ref{lemma-f} and \ref{prop2}, $L$ satisfies simultaneously the
respective conditions (i)$^\prime$ of those propositions, whence 
$\HomLie(L) \simeq K$.
\end{proof}

\medskip

Finally, let us indicate another strong consequence of closedness of Hom-Lie 
structures with respect to the anticommutator.

Recall that the \emph{centroid} of a Lie algebra $L$, denoted by $\Cent(L)$, is
the space of linear maps $\varphi: L \to L$ commuting with adjoint maps, i.e., 
satisfying the condition
$$
\varphi([x,y]) = [\varphi(x),y]
$$
for any $x,y \in L$. Centroid can be thought as the invariant submodule 
$\Hom(L,L)^L$ of the standard $L$-module $\Hom(L,L)$, with the $L$-action given
by the formula
\begin{equation}\label{eq-act}
(y \bullet \varphi)(x) = [\varphi(x),y] - \varphi([x,y]) ,
\end{equation}
where $x,y \in L$ and $\varphi \in \Hom(L,L)$.

It is clear that
\begin{equation}\label{eq-c}
\Cent(L) \subseteq \HomLie(L) .
\end{equation}

\begin{proposition}\label{prop-hom}
Let $L$ be a Lie algebra such that $\HomLie(L)$ is closed with respect to the
anticommutator. Then:
\begin{enumerate}[\upshape(i)]
\item
$L$ is homomorphically mapped to the Lie algebra 
$\Der(\HomLie(L))$;
\item
$\Aut(L)$ is homomorphically mapped to the group $\Aut(\HomLie(L))$.
\end{enumerate}
Here, in both cases, $\HomLie(L)$ is considered as a Jordan algebra with respect
to the anticommutator.
\end{proposition}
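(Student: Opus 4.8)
The plan is to realize both maps as restrictions of canonical structure-preserving maps on the ambient Jordan algebra $\End(L)^{(+)}$. Observe first that, by hypothesis, $\HomLie(L)$ is precisely a subalgebra of the special Jordan algebra $\End(L)^{(+)}$, since the anticommutator of linear maps is exactly the plus-product $\varphi * \psi = \frac 12(\varphi\circ\psi + \psi\circ\varphi)$; in particular $\HomLie(L)$ is itself a Jordan algebra. So it is enough to produce (i) a Lie algebra homomorphism $L \to \Der(\End(L)^{(+)})$ and (ii) a group homomorphism $\Aut(L) \to \Aut(\End(L)^{(+)})$, each landing in the stabilizer of the subspace $\HomLie(L)$; composing with restriction to $\HomLie(L)$ then gives the proposition.

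For (i), for $y \in L$ put $D_y(\varphi) = \operatorname{ad}_y\circ\varphi - \varphi\circ\operatorname{ad}_y = [\operatorname{ad}_y,\varphi]$ (this is, up to sign, the action (\ref{eq-act})). For any associative algebra $A$ the assignment $a \mapsto [a,\cdot\,]$ is a Lie algebra homomorphism from the commutator Lie algebra of $A$ into $\Der(A)$, and $\operatorname{ad}\colon L \to \End(L)$ is a Lie algebra homomorphism; hence $y \mapsto D_y$ is a Lie algebra homomorphism of $L$ into $\Der(\End(L))$. Since every derivation of an associative algebra is a derivation of its plus algebra, $y \mapsto D_y$ in fact maps $L$ into $\Der(\End(L)^{(+)})$. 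For (ii), for $g \in \Aut(L)$ put $\Phi_g(\varphi) = g\circ\varphi\circ g^{-1}$; this is an automorphism of the associative algebra $\End(L)$, hence of $\End(L)^{(+)}$, and $g \mapsto \Phi_g$ is plainly a group homomorphism.

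What remains --- the only non-formal ingredient, and where the actual work lies --- is to check that each $D_y$ and each $\Phi_g$ maps $\HomLie(L)$ into itself. For $\Phi_g$ this is immediate: substituting $x,y,z \mapsto g(x'),g(y'),g(z')$ in the Hom-Jacobi equation (\ref{hom-jac}) for $\Phi_g(\varphi)$ and using that $g$ preserves the bracket turns it into (\ref{hom-jac}) for $\varphi$; since $g^{-1} \in \Aut(L)$ as well, $\Phi_g$ in fact restricts to an automorphism of $\HomLie(L)$. For $D_y$, put $\psi = D_y(\varphi)$, so $\psi(z) = [y,\varphi(z)] - \varphi([y,z])$, and verify directly that $\psi$ satisfies (\ref{hom-jac}): split $\sum_{\mathrm{cyc}}[[x_1,x_2],\psi(x_3)]$ into its two evident parts; one application of the Jacobi identity peels off from the first part the term $\bigl[\,y,\ \sum_{\mathrm{cyc}}[[x_1,x_2],\varphi(x_3)]\,\bigr] = 0$, and, using that $\operatorname{ad}_y$ is a derivation of $L$, the remainder is exactly the sum of three instances of (\ref{hom-jac}) for $\varphi$, each with one argument replaced by $[y,x_i]$ --- hence it vanishes. (Conceptually, the left-hand side of (\ref{hom-jac}), viewed as a function of $\varphi$, is $L$-equivariant for the natural actions, so its kernel $\HomLie(L)$ is an $L$-submodule irrespective of the closedness hypothesis.) Granting this, $D_y$ restricts to a derivation, and $\Phi_g$ to an automorphism, of the Jordan algebra $\HomLie(L)$; then $y \mapsto D_y|_{\HomLie(L)}$ and $g \mapsto \Phi_g|_{\HomLie(L)}$ are the homomorphisms required in (i) and (ii).
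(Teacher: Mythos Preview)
Your proof is correct and follows essentially the same route as the paper: both use the standard $L$-action $y\mapsto [\operatorname{ad}_y,\,\cdot\,]$ and the conjugation $\Aut(L)$-action on $\End(L)$, check that $\HomLie(L)$ is invariant under them, and observe that these actions are by derivations/automorphisms of the Jordan product. The paper outsources the invariance of $\HomLie(L)$ to \cite[Lemma~1]{homlie} and declares the compatibility with the anticommutator ``obvious'', whereas you verify the former directly and explain the latter conceptually via $\End(L)^{(+)}$; the substance is the same.
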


\begin{proof}
(i)
As noted in \cite[Lemma 1]{homlie}, $\HomLie(L)$ is a submodule of the 
$L$-module $\Hom(L,L)$. The $L$-action (\ref{eq-act}) is obviously compatible
with the product (\ref{eq-jord}), thus $L$ acts on the Jordan algebra 
$\HomLie(L)$ by derivations.

(ii) Similarly, $\HomLie(L)$ is a submodule of the $\Aut(L)$-module 
$\Hom(L,L)$, where the $\Aut(L)$-action is given by the conjugation 
$\Ad_\alpha: \varphi \mapsto \alpha^{-1} \circ \varphi \circ \alpha$ for 
$\varphi \in \HomLie(L)$ and $\alpha \in \Aut(L)$ (\cite[\S 1]{homlie}). This
action is obviously compatible with the product (\ref{eq-jord}), which yields 
the action of $\Aut(L)$ on the Jordan algebra $\HomLie(L)$.
\end{proof}

\begin{corollary}\label{cor-1}
Let $L$ be a simple Lie algebra such that $\HomLie(L)$ is closed with respect to
the anticommutator. Then either $\HomLie(L) = \Cent(L)$, or $L$ is isomorphic to
a subalgebra of $\Der(\HomLie(L))$.
\end{corollary}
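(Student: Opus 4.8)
The plan is to combine Proposition \ref{prop-hom}(i) with the fact that a simple Lie algebra admits essentially only one "interesting" homomorphic image. By Proposition \ref{prop-hom}(i), there is a Lie algebra homomorphism $\rho: L \to \Der(\HomLie(L))$. Since $L$ is simple, its only ideals are $0$ and $L$, so $\Ker \rho$ is either $0$ or $L$. If $\Ker \rho = 0$, then $\rho$ is injective and $L$ is isomorphic to a subalgebra of $\Der(\HomLie(L))$, which is the second alternative. So it remains to analyze the case $\Ker \rho = L$, i.e., $\rho = 0$, and show that this forces $\HomLie(L) = \Cent(L)$.

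The map $\rho$ is, by the proof of Proposition \ref{prop-hom}(i), induced by the $L$-action (\ref{eq-act}) on the submodule $\HomLie(L) \subseteq \Hom(L,L)$. Thus $\rho = 0$ means precisely that $L$ acts trivially on $\HomLie(L)$ via (\ref{eq-act}); that is, for every $\varphi \in \HomLie(L)$ and every $y \in L$ we have $y \bullet \varphi = 0$, i.e.
$$
[\varphi(x),y] - \varphi([x,y]) = 0 \quad \text{for all } x,y \in L .
$$
But this is exactly the defining condition for $\varphi$ to lie in $\Cent(L)$. Hence $\rho = 0$ is equivalent to $\HomLie(L) \subseteq \Cent(L)$, and combined with the always-valid inclusion (\ref{eq-c}), we get $\HomLie(L) = \Cent(L)$. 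This proves the dichotomy.

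I expect no serious obstacle here; the only point requiring a little care is identifying the kernel of $\rho$ correctly with the set of $\varphi$ annihilated by the $L$-action (\ref{eq-act}), and recognizing the annihilator condition as the centroid condition — both of which are immediate from the formulas. One should also note that the two alternatives are not mutually exclusive (they overlap when $\Cent(L)$ itself embeds into $\Der(\HomLie(L))$), but the statement only claims that at least one holds, so this causes no difficulty.
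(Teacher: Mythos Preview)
Your proof is correct and follows essentially the same argument as the paper: use the homomorphism from Proposition~\ref{prop-hom}(i), invoke simplicity of $L$ to conclude the kernel is $0$ or $L$, and in the latter case recognize that trivial $L$-action on $\HomLie(L)$ is exactly the centroid condition, then use (\ref{eq-c}). The only quibble is your closing parenthetical, where you speak of ``identifying the kernel of $\rho$ with the set of $\varphi$ annihilated by the $L$-action'' --- the kernel of $\rho$ lives in $L$, not in $\HomLie(L)$; but your main argument handles this correctly, so this is just a slip of phrasing.
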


\begin{proof}
Since $L$ is simple, the kernel of the homomorphism from 
Proposition~\ref{prop-hom}(i) either coincides with the whole $L$, or is zero. 
In the first case, $L$ acts on $\HomLie(L)$ trivially, i.e., 
$\HomLie(L) \subseteq \Cent(L)$, and due to (\ref{eq-c}), the equality holds. In
the second case, the homomorphism is an embedding.
\end{proof}

\section{Witt algebras, finite- and infinite-dimensional}\label{sec-witt}

To see that the proofs of Propositions \ref{lemma-f} and \ref{prop2} do not work
in the infinite-dimensional case, let us turn to (generalized) Witt algebras. 
Let $G$ be a subgroup of the additive group of the ground field $K$, and $W_G$ 
is the Lie algebra linearly spanned by elements $e_\alpha$, $\alpha \in G$, with
the bracket
$$
[e_\alpha , e_\beta] = (\beta - \alpha) e_{\alpha + \beta} .
$$

Specializing $G$ to various particular cases, we get various instances of Witt 
algebras. Thus, if $K$ is of characteristic zero, and $G = \mathbb Z$, we get 
the (two-sided) infinite-dimensional Witt algebra, and if $K$ is of 
characteristic $p > 0$, containing the field $G = \GF(p^n)$ (isomorphic, as an 
additive group, to $(\mathbb Z/p\mathbb Z)^n)$, we get the Zassenhaus algebra 
$W_1(n)$. The latter algebra has another realization as a $\mathbb Z$-graded Lie
algebra, with a basis $\{e_{-1}, e_0, e_1, \dots, e_{p^n-2}\}$, and 
multiplication
\begin{equation}\label{eq-zass}
[e_i,e_j] = \Big(\binom{i+j+1}{j} - \binom{i+j+1}{i}\Big) e_{i+j} .
\end{equation}

\begin{theorem}\label{th-w}
$\HomLie(W_G) \simeq K[G]$. A basis of $\HomLie(W_G)$ may be chosen to consist 
of ``shifts'' \linebreak $e_\alpha \mapsto e_{\alpha + \sigma}$ for a fixed 
$\sigma \in G$.
\end{theorem}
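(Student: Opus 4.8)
I would compute $\HomLie(W_G)$ directly from the Hom-Jacobi equation written in the basis $\{e_\alpha\}_{\alpha\in G}$, exploiting the grading of $W_G$ by $G$.  Write an arbitrary linear map as $\varphi(e_\alpha)=\sum_{\gamma\in G}c_{\alpha,\gamma}\,e_\gamma$ (a finite sum for each $\alpha$).  The first step is to show that $\varphi$ must be ``graded'' in an appropriate sense, i.e.\ it is a (possibly infinite) sum of shift operators $S_\sigma\colon e_\alpha\mapsto e_{\alpha+\sigma}$.  To get this, plug $x=e_\alpha$, $y=e_\beta$ into (\ref{hom-jac}) together with a third basis element $e_\delta$, collect the coefficient of a fixed basis vector $e_\mu$, and read off the resulting scalar identities; the structure constants $(\beta-\alpha)$ being ``generic'' enough should force $c_{\alpha,\gamma}$ to depend only on the difference $\gamma-\alpha$.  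I expect this homogeneity argument, done cleanly, to already isolate the shifts $S_\sigma$ as the only possible ``homogeneous components'' of $\varphi$.

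\textbf{The core computation.}  Next I would check that each shift $S_\sigma$ is indeed a Hom-Lie structure.  Using (\ref{hom-jac}) with $x=e_\alpha$, $y=e_\beta$, $z=e_\gamma$ and $\varphi=S_\sigma$, the left-hand side becomes
\begin{equation*}
(\beta-\alpha)(\gamma-\alpha-\beta)\,e_{\alpha+\beta+\gamma+\sigma}
+(\alpha-\gamma)(\beta-\gamma-\alpha)\,e_{\alpha+\beta+\gamma+\sigma}
+(\gamma-\beta)(\alpha-\beta-\gamma)\,e_{\alpha+\beta+\gamma+\sigma},
\end{equation*}
and one verifies that the scalar factor $(\beta-\alpha)(\gamma-\alpha-\beta)+(\alpha-\gamma)(\beta-\gamma-\alpha)+(\gamma-\beta)(\alpha-\beta-\gamma)$ vanishes identically as a polynomial in $\alpha,\beta,\gamma$ — this is exactly the computation showing $\id_{W_G}$ (the case $\sigma=0$) satisfies Hom-Jacobi, since the Hom-Jacobi identity for $\varphi=S_\sigma$ differs from ordinary Jacobi only by the harmless overall index shift by $\sigma$.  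So $S_\sigma\in\HomLie(W_G)$ for every $\sigma\in G$.  Combined with the homogeneity step, every $\varphi\in\HomLie(W_G)$ is a locally finite $K$-linear combination $\sum_\sigma\lambda_\sigma S_\sigma$; but ``locally finite'' here must be genuinely finite because $\varphi(e_0)$ is a finite combination of basis vectors, and the $S_\sigma$ act on $e_0$ by linearly independent vectors $e_\sigma$.  Hence $\HomLie(W_G)=\bigoplus_{\sigma\in G}K\,S_\sigma$, and the map $S_\sigma\mapsto[\sigma]$ is visibly a linear isomorphism onto $K[G]$; since $S_\sigma\circ S_\tau=S_{\sigma+\tau}$ it is even an isomorphism of associative algebras, so in particular $\HomLie(W_G)$ is closed under the anticommutator and $\cong K[G]^{(+)}$.

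\textbf{The main obstacle.}  The delicate point is the homogeneity step: a priori $\varphi$ could mix gradings, and one must rule this out using only the Hom-Jacobi relations, not extraneous structure.  The danger is degenerate choices of indices where the structure constants $(\beta-\alpha)$ vanish or coincide — e.g.\ in small characteristic, or when $G$ is small (the Zassenhaus case, where $G=\GF(p^n)$ is finite and ``generic'' arguments over $G$ are suspect).  I would handle this by choosing the auxiliary indices carefully (for $|G|$ large enough there is always room), and for the genuinely small algebras — in particular $W_1(1)=W_1$, where $\dim W_G=p$ — by a separate finite check, perhaps passing through the realization (\ref{eq-zass}) and verifying the claim by the same cancellation identity in the $\mathbb Z$-grading there.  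A secondary nuisance is bookkeeping about finiteness/local finiteness of the sums defining $\varphi$, which I would dispatch by the remark that evaluation at $e_0$ already pins down all but finitely many coefficients to be zero.
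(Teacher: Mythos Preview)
Your approach is correct and is precisely what the paper does---except that the paper does not actually carry it out: its entire proof reads ``A verbatim repetition of reasonings in the proof of \cite[Theorem 3.2]{xie-liu}, which treats the case $G=\mathbb Z$.''  Your direct computation in the $G$-graded basis, verifying that each shift $S_\sigma$ satisfies Hom-Jacobi and then arguing that any $\varphi$ must decompose into shifts, is exactly the Xie--Liu computation transported from $\mathbb Z$ to a general $G$, and your identification of the homogeneity step as the only nontrivial point is accurate.

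One small slip worth fixing: in your displayed check of $S_\sigma$, the second factors should read $(\gamma+\sigma-\alpha-\beta)$, $(\beta+\sigma-\gamma-\alpha)$, $(\alpha+\sigma-\beta-\gamma)$ rather than their $\sigma$-free versions; the extra $\sigma$-contribution is $\sigma\bigl[(\beta-\alpha)+(\alpha-\gamma)+(\gamma-\beta)\bigr]=0$, so the conclusion is unaffected, but the display as written is not literally what Hom-Jacobi produces.
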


\begin{proof}
A verbatim repetition of reasonings in the proof of \cite[Theorem 3.2]{xie-liu},
which treats the case $G = \mathbb Z$.
\end{proof}

Therefore, the Hom-Lie structures on $W_G$ are closed with respect to 
composition (and thus with respect to the anticommutator), and form the 
commutative associative algebra isomorphic to the group algebra $K[G]$ (in the case 
$G = \mathbb Z$ this was already noted in \cite{xie-liu}).

\begin{proposition}\label{prop-w}
If $K$ is of characteristic zero, then the algebra $W_G$ satisfies neither the 
property $\diamondsuit$, nor the property $\heartsuit$.
\end{proposition}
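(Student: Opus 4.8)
The plan is to show that the very special structure of $W_G$ — it is simple (for $K$ of characteristic zero and $G\neq 0$), graded, and its homogeneous components are one-dimensional — is incompatible with either decomposition. I would treat $\diamondsuit$ and $\heartsuit$ in parallel, since both postulate nonzero subspaces $A,B\subseteq W_G$ with $[[A,A],B]=0$ and $[[B,B],A]=0$; the only structural difference is that $\diamondsuit$ asks for a direct sum $W_G=A\oplus B$, whereas $\heartsuit$ asks for $A\subseteq B$ with $\dim A+\dim B=\dim W_G$. I would first observe that in both cases $B\neq 0$, so the condition $[[A,A],B]=0$ says that the subspace $[[A,A],W_G]$ misses the nonzero subspace $B$ if $B$ is all of $W_G$ — more usefully, I would argue that $[A,A]$ must lie in a rather small subspace.

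First I would note that $W_G$ has trivial centre and, more importantly, that for any nonzero $x\in W_G$ the adjoint map $\operatorname{ad}x$ has image of finite codimension — in fact for a homogeneous $x=e_\alpha$ with $\alpha\neq 0$ we have $[e_\alpha,e_\beta]=(\beta-\alpha)e_{\alpha+\beta}$, which vanishes only for $\beta=\alpha$, so $\operatorname{ad}e_\alpha$ is ``almost surjective''. The key lemma I would isolate: if $V\subseteq W_G$ is a subspace such that $[V,W_G]$ is contained in a proper subspace of the form ``complement of a nonzero subspace'', or more precisely if $[[V,V],W_G]$ fails to be all of $W_G$, then $V$ is severely constrained. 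Concretely, I would show that $[[A,A],B]=0$ with $\dim A+\dim B=\dim W_G$ (or $W_G=A\oplus B$) forces either $A$ or $B$ to be one-dimensional or zero, by a dimension/grading count: if both $A$ and $B$ had dimension $\geq 2$, one could find elements producing brackets that cannot all annihilate, exploiting that the structure constants $\beta-\alpha$ are nonzero in characteristic zero whenever $\alpha\neq\beta$.

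The cleanest route is probably to reduce to the homogeneous case. Although $A,B$ need not be graded, I would pass to associated graded objects or use a filtration argument: pick $0\neq a\in A$ and $0\neq b\in B$ with ``leading terms'' $e_\alpha$ and $e_\beta$ respectively in the $G$-grading, and compute leading terms of the iterated brackets $[[a,a'],b]$. Since $[a,a]=0$ I actually need two elements $a,a'\in A$; so the argument only bites when $\dim A\geq 2$. When $\dim A=1$, say $A=Ka$, then $[A,A]=0$ and the condition $[[A,A],B]=0$ is vacuous, so I am left with $[[B,B],A]=0$, i.e. $[[B,B],a]=0$, meaning $a$ centralizes $[B,B]$; but in $W_G$ the centralizer of any nonzero element is one-dimensional, so $[B,B]\subseteq Ka$, and then $B$ is a subspace on which the bracket takes values in a single line — again a strong constraint that I would show is impossible for $\dim B\geq 2$ (essentially $B$ would be a ``Heisenberg-like'' subspace, but $W_G$ has no two-dimensional subalgebras with one-dimensional derived algebra sitting inside a prescribed line, by the structure constants). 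Handling $\dim A=\dim B=1$ is only possible if $\dim W_G\leq 2$, which is excluded. The symmetric case $\dim B=1$ forces $\dim A=1$ too.

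The main obstacle, and the step I would spend the most care on, is the case where \emph{neither} $A$ nor $B$ is one-dimensional and neither is graded — there the filtration/leading-term bookkeeping can be delicate, because cancellation among lower-order terms could in principle rescue the identity. I expect the resolution to be: choose $a\in A$ with maximal leading exponent and a second independent $a'\in A$; then $[a,a']$ has a well-defined leading term $e_{\gamma}$ with $\gamma$ the sum of the two leading exponents (the coefficient being nonzero since the exponents differ, using $\operatorname{char}K=0$ and that $A$ being two-dimensional forces two distinct leading exponents after row-reduction); then $[[a,a'],b]$ for $b\in B$ with leading exponent $\delta$ has leading term a nonzero multiple of $e_{\gamma+\delta}$ unless $\delta=\gamma$, and one arranges (by the dimension hypothesis $\dim A+\dim B=\dim W_G$, so $B$ is large) a choice of $b\in B$ avoiding that single bad value, contradicting $[[A,A],B]=0$. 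Once this leading-term computation is pinned down, both $\diamondsuit$ and $\heartsuit$ fall, and the Proposition follows. I would also remark that this simultaneously re-confirms, in view of Corollary~\ref{cor-algcl}, that $\HomLie(W_G)\simeq K[G]$ is consistent only because $W_G$ is infinite-dimensional — the finite-dimensional corollary does not apply.
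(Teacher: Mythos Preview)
Your approach is essentially the paper's: exploit the total ordering on $G$ (available in characteristic zero since $G$ is torsion-free) and a leading-term argument to bound $\dim A$ and $\dim B$, then contradict $\dim W_G = \infty$. The paper streamlines this by quoting the known fact (Kubo) that every nonzero abelian subalgebra of $W_G$ is one-dimensional---equivalently, the centralizer of any nonzero element is the line through it---together with the companion observation that if $[A,A]$ is one-dimensional then $\dim A \le 2$. From $[[A,A],B]=0$ this yields $\dim A \le 2$, symmetrically $\dim B \le 2$, hence $\dim W_G \le 4$, a contradiction. Your direct leading-term computation is exactly how one proves Kubo's lemma, so the two arguments coincide once unpacked.

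There is one slip in your case analysis. When $\dim A = 1$ you correctly deduce $[B,B] \subseteq C(a) = Ka$, but then assert this is impossible for $\dim B \ge 2$. It is not: any two-dimensional subspace $B = \langle e_\alpha, e_\beta\rangle$ with $\alpha \ne \beta$ has $[B,B] = K e_{\alpha+\beta}$ one-dimensional. The correct conclusion (and the one the paper draws) is only $\dim B \le 2$: three independent elements of $B$ with distinct leading exponents $\beta_1,\beta_2,\beta_3$ would produce brackets with three distinct leading exponents $\beta_i+\beta_j$, contradicting $\dim [B,B] \le 1$. This still gives $\dim A + \dim B \le 3 < \dim W_G$, so your argument survives with this correction.
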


\begin{proof}
Assume the contrary. Any nonzero abelian subalgebra of $W_G$ 
is one-dimensional (see, e.g., \cite[Corollary(a)]{kubo}). Hence the condition
$[[A,A],B] = 0$ implies either $[A,A] = 0$, and hence $A$ is one-dimensional, or
$[A,A] = B$ is one-dimensional. As in \cite{kubo}, using the fact that $G$ can 
be ordered, it is easy to see that if $A$ is a subspace of $W_G$ such that 
$[A,A]$ is one-dimensional, then $A$ is two-dimensional. Therefore, in any case
$\dim A \le 2$. Similarly, $[[B,B],A] = 0$ implies $\dim B \le 2$, and hence $\dim W_G \le 4$, a contradiction.
\end{proof}

By contrast with the infinite-dimensional characteristic zero case, we have

\begin{proposition}
If $G$ is finite, then the algebra $W_G$ does not satisfy the property
$\diamondsuit$, and satisfies the property $\heartsuit$.
\end{proposition}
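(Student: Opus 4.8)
The plan is to verify both halves separately. For the first half — that $W_G$ with $G$ finite fails $\diamondsuit$ — I would mimic the structure of the proof of Proposition \ref{prop-w}, but now the field has characteristic $p$, so the ``any abelian subalgebra is one-dimensional'' fact is no longer available and the argument must be replaced. Instead I would argue via $\HomLie(W_G)$ directly. By Theorem \ref{th-w}, $\HomLie(W_G) \simeq K[G]$; since $G$ is a finite abelian $p$-group (indeed elementary abelian when $G = \GF(p^n)$), the group algebra $K[G]$ is local with maximal ideal the augmentation ideal, and every element of that ideal is nilpotent. Hence the only idempotents in $\HomLie(W_G)$ are $0$ and $\id$. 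By Lemma \ref{lemma-equiv}, $W_G$ having a nontrivial idempotent Hom-Lie structure is equivalent to $\diamondsuit$; since there is none, $\diamondsuit$ fails. (Alternatively, one can run the elementary-ordering argument of \cite{kubo} using a linear order on $G$ coming from a $\mathbb{Z}$-basis of the vector space $G$ over $\GF(p)$, bounding $\dim A, \dim B \le 2$ as in Proposition \ref{prop-w}; but since $W_G$ is finite-dimensional of dimension $p^n$, which for $n$ small is $\le 4$, this route needs $p^n > 4$ and is messier, so I would prefer the idempotent argument.)

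For the second half — that $W_G$ satisfies $\heartsuit$ — I would exhibit explicit subspaces $A \subseteq B$ with $\dim A + \dim B = \dim W_G = |G|$ and $[[A,A],B] = [[B,B],A] = 0$. The natural candidate: by Proposition \ref{prop2}, since $\HomLie(W_G) \simeq K[G]$ is \emph{not} semisimple (it has nilpotent elements, $G$ being a finite $p$-group), the proposition forces $\heartsuit$, with $A = \im(\varphi)$, $B = \Ker(\varphi)$ for a nonzero $\varphi \in \HomLie(W_G)$ with $\varphi^2 = 0$. So the cleanest proof is: pick such a $\varphi$ and invoke Proposition \ref{prop2} verbatim. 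Concretely, one such $\varphi$ is the nilpotent operator corresponding to $\sum_{\alpha} c_\alpha (\text{shift by }\alpha)$ lying in the square-zero part of the augmentation ideal of $K[G]$; for $G = (\mathbb{Z}/p)^n$ a single generator $g$ of a $\mathbb{Z}/p$ factor gives $(\mathrm{shift}_g - \id)^{p-1}$ whose square is a multiple of $(\mathrm{shift}_g-\id)^{2(p-1)} = 0$ once $2(p-1) \ge p$, i.e.\ $p \ge 2$, so this always works.

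The main obstacle is the first half: one must be careful that the characteristic-zero tools (one-dimensionality of abelian subalgebras, divided-power subtleties) are not silently reused in characteristic $p$. Routing the non-$\diamondsuit$ claim through the structure of the \emph{commutative associative} algebra $K[G]$ — specifically that a finite $p$-group algebra over a field of characteristic $p$ is local, hence has no nontrivial idempotents — sidesteps all Lie-theoretic pitfalls and matches the spirit of Lemma \ref{lemma-equiv}. A minor point to check along the way is that $\HomLie(W_G) \simeq K[G]$ as \emph{associative} algebras under composition (not merely as vector spaces), which Theorem \ref{th-w} and the remark following it already supply, since shifts compose additively in $G$. Once that is in hand, both halves are short.
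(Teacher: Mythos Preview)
Your argument for the failure of $\diamondsuit$ is essentially the paper's: both route through Theorem~\ref{th-w} to identify $\HomLie(W_G)$ with the group algebra $K[G]\simeq K[x_1,\dots,x_n]/(x_1^p,\dots,x_n^p)$, observe it is local in characteristic $p$ and hence has no nontrivial idempotents, and conclude via Lemma~\ref{lemma-equiv}.

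For $\heartsuit$ you take a genuinely different route. The paper works in the $\mathbb{Z}$-graded realization $W_1(n)$ of~(\ref{eq-zass}) and writes down explicit subspaces $A=\langle e_{p^n-2}\rangle$ and $B=\langle e_0,e_1,\dots,e_{p^n-2}\rangle$, verifying the conditions of $\heartsuit$ by hand. You instead observe that $K[G]$ has nonzero nilpotents, so $\HomLie(W_G)$ fails condition~(i) of Proposition~\ref{prop2}, and since $\HomLie(W_G)$ is closed under the anticommutator (being commutative associative), $W_G$ must satisfy $\heartsuit$. Your approach is slicker and reuses the machinery of \S\ref{sec-2}, at the cost of not exhibiting the subspaces $A,B$ explicitly; the paper's approach is more hands-on and identifies $B$ with the maximal (codimension-one) subalgebra of $W_1(n)$, which connects nicely to the conjecture about $\heartsuit$ and codimension-one subalgebras made later in \S\ref{sec-suit}.
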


\begin{proof}
If $G$ is finite, then $K$ is necessarily of characteristic $p>0$, $G$ is the
additive group of $\GF(p^n)$ for some $n$, $W_G \simeq W_1(n)$, and by 
Theorem~\ref{th-w}, $\HomLie(W_G)$ is isomorphic to the reduced polynomial 
algebra $K[x_1,\dots,x_n]/(x_1^p,\dots,x_n^p)$. The latter algebra does not 
contain nontrivial idempotents, and by Lemma \ref{lemma-equiv}, $W_G$ does not 
satisfy the property $\diamondsuit$.

Looking at the realization (\ref{eq-zass}) of $W_1(n)$, and setting 
$A = \langle e_{p^n-2} \rangle$ and 
$B = \langle e_0, e_1, \dots, e_{p^n-2} \rangle$, we see that $W_1(n)$ satisfies
the property $\heartsuit$.
\end{proof}

We believe that the algebra $W_G$ does not satisfy the property $\diamondsuit$
in all cases, but the proof of this is lacking (infinite-dimensional Witt 
algebras over fields of positive characteristic seem to be more tricky).

Proposition \ref{prop-w} shows that $W_G$, in the case of zero characteristic, 
provides an infinite-dimensional counterexample to Propositions \ref{lemma-f} 
and \ref{prop2}. Indeed, using the fact that $G$ is ordered, it is easy to see 
that $K[G]$ does contain neither nontrivial idempotents, nor nontrivial 
nilpotent elements. Moreover, $K[G]$ satisfies neither the condition 
(\ref{i-11}) of Proposition~\ref{lemma-f}, nor the condition (i)$^\prime$ of 
Proposition~\ref{prop2} (but satisfies the condition (\ref{i-1}) of 
Proposition~\ref{prop2}).

\section{More on algebras satisfying the properties $\diamondsuit$ and 
$\heartsuit$}\label{sec-suit}

As an illustration of an application of 
Propositions~\ref{lemma-f}-\ref{prop-hom}, let us sketch the proof of the 
following 

\begin{theorem}
If $\mathfrak g$ is a central simple finite-dimensional Lie algebra over a 
field $K$ of characteristic zero such that $\HomLie(\mathfrak g)$ is closed with
respect to the commutator, then either $\HomLie(\mathfrak g) \simeq K$, or 
$\mathfrak g$ is $3$-dimensional.
\end{theorem}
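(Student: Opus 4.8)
The plan is to apply Corollary~\ref{cor-algcl} after reducing to the algebraically closed case, and then rule out the properties $\diamondsuit$ and $\heartsuit$ for central simple Lie algebras of dimension $> 3$. First I would note that since $\mathfrak{g}$ is central simple over $K$, the scalar extension $\overline{\mathfrak{g}} = \mathfrak{g}\otimes_K \overline{K}$ is simple, and $\HomLie$ commutes with such scalar extension (because the Hom-Jacobi equation is linear, so $\HomLie(\overline{\mathfrak g}) = \HomLie(\mathfrak g)\otimes_K\overline K$); moreover closedness with respect to the anticommutator is preserved, and the dimension is unchanged. So it suffices to prove the statement over $\overline K$.

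Next I would invoke Corollary~\ref{cor-algcl}: over $\overline K$, either $\HomLie(\overline{\mathfrak g})\simeq \overline K$ (which descends to $\HomLie(\mathfrak g)\simeq K$ by a dimension count, since $\dim_K\HomLie(\mathfrak g) = \dim_{\overline K}\HomLie(\overline{\mathfrak g}) = 1$ and $\id_{\mathfrak g}\in\HomLie(\mathfrak g)$), or $\overline{\mathfrak g}$ satisfies $\diamondsuit$, or $\overline{\mathfrak g}$ satisfies $\heartsuit$. So the core of the argument is: a simple finite-dimensional Lie algebra over an algebraically closed field of characteristic zero that satisfies $\diamondsuit$ or $\heartsuit$ must be $3$-dimensional, i.e., isomorphic to $\mathfrak{sl}(2)$.

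For this I would argue with the decomposition data $A, B$ directly using the root-space structure. The conditions $[[A,A],B]=0$ and $[[B,B],A]=0$ say that $[A,A]$ and $[B,B]$ are ``small'' in the sense of having large centralizers; the key point is that in a semisimple Lie algebra the centralizer of any nonzero semisimple element is a proper subalgebra, and more to the point one can bound things via the fact that a subalgebra $\mathfrak h$ with $[\mathfrak h, C] = 0$ for a subspace $C$ forces $\dim C \le \dim\mathfrak{g} - \dim\mathfrak{g}'$ where $\mathfrak g'$ is generated by $\mathfrak h$ — one should express this through the non-degeneracy of the Killing form: $[[A,A],B]=0$ means $B \perp [[A,A],\mathfrak g]$ so $\dim B \le \dim\mathfrak g - \dim[[A,A],\mathfrak g]$, and symmetrically. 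Combining the two inequalities with $\dim A + \dim B = \dim\mathfrak g$ (in case $\diamondsuit$) or $\dim A + \dim B = \dim \mathfrak g$, $A\subseteq B$ (in case $\heartsuit$) should squeeze $\mathfrak g$ down, and then a short case analysis over the simple Lie algebras $\mathfrak{sl}(2), \mathfrak{sl}(3), \mathfrak{so}(5), \dots$ — or better, a uniform argument using that $[[A,A],\mathfrak g]$ is an ideal hence equals $\mathfrak g$ unless $[A,A]$ lies in the center (which is zero) — forces $\dim A \le 1$ or similar, leaving only $\mathfrak{sl}(2)$.

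The hard part, and the reason the theorem is only ``sketched'' in the paper, is making the root-theoretic squeeze precise: showing that $\diamondsuit$/$\heartsuit$ for a simple Lie algebra really does force dimension $3$, rather than merely being hard to satisfy. The cleanest route is probably: since $[[A,A],\mathfrak g]$ is an ideal of $\mathfrak g$, simplicity gives $[[A,A],\mathfrak g]\in\{0,\mathfrak g\}$; if it is $\mathfrak g$ then $B\perp\mathfrak g$, impossible; so $[[A,A],\mathfrak g] = 0$, i.e.\ $[A,A]$ is central, hence $[A,A]=0$, so $A$ is an abelian subspace — wait, $A$ need not be a subalgebra, but $[A,A]=0$ makes it one — an abelian subalgebra, and symmetrically $B$ is abelian. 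In $\heartsuit$, $A\subseteq B$ abelian with $\dim A + \dim B = \dim\mathfrak g$ forces $\dim\mathfrak g \le 2\dim B \le 2\,\mathrm{rk}(\mathfrak g)\cdot(\text{abelian bound})$; the maximal dimension of an abelian subalgebra of a simple Lie algebra is known (Mal'cev), and checking when $2\cdot(\text{that bound})\ge\dim\mathfrak g$ isolates $\mathfrak{sl}(2)$. In $\diamondsuit$, $\mathfrak g = A\oplus B$ with both $A,B$ abelian subalgebras: a simple Lie algebra that is the vector-space sum of two abelian subalgebras — by the Onishchik-type classification of such decompositions (and these are exactly the algebras appearing in ``$\diamondsuit$'' which the paper flags as a studied topic), the only simple example is again $\mathfrak{sl}(2) = (\text{upper triangular}) \oplus (\text{strictly lower triangular})$. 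Citing the relevant decomposition results — this is where I expect to lean on the literature the paper mentions rather than reprove everything — finishes the argument.
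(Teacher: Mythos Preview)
Your reduction to the algebraic closure and the invocation of Corollary~\ref{cor-algcl} are fine, and this is a legitimate (and in some ways more direct) route than the paper's. But the core step --- ruling out $\diamondsuit$ and $\heartsuit$ for simple $\overline{\mathfrak g}$ of dimension $>3$ --- has a real gap. The claim that $[[A,A],\mathfrak g]$ is an ideal of $\mathfrak g$ is false: $A$ is an arbitrary subspace, and for a subspace $S$ the set $[S,\mathfrak g]$ is generally not an ideal (take $S$ a single root space). Consequently the dichotomy ``$[[A,A],\mathfrak g]$ is $0$ or $\mathfrak g$'' fails, and you cannot conclude $[A,A]=0$, $[B,B]=0$. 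In fact $\mathfrak{sl}(2)$ itself is a counterexample to that conclusion: with $A=\langle h\rangle$ and $B=\langle e,f\rangle$ one has $[[A,A],B]=0$ and $[[B,B],A]=[\langle h\rangle,\langle h\rangle]=0$, yet $[B,B]=\langle h\rangle\ne 0$ and $B$ is not even a subalgebra. Relatedly, your claimed decomposition of $\mathfrak{sl}(2)$ into two abelian subalgebras cannot exist --- as the paper itself recalls, a sum of two abelian subalgebras is metabelian --- so the endpoint of your ``cleanest route'' is inconsistent.

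The paper sidesteps exactly this difficulty by passing not to $\overline K$ but, via the Weyl unitary trick, to a compact real form. Compact Lie algebras have no nonzero nilpotent elements and are closed under subalgebras, which makes an inductive argument that $\diamondsuit$/$\heartsuit$ forces $\mathfrak{su}(2)$ genuinely elementary. Since one is then over $\mathbb R$ rather than an algebraically closed field, the paper cannot use Corollary~\ref{cor-algcl}; instead it combines Propositions~\ref{lemma-f} and~\ref{prop2} to get that $\HomLie(\mathfrak g)$ is a simple real Jordan algebra with every nonzero element invertible, and reads off from the classification that this is $\mathbb R$ or $\mathbb Q^{(+)}$; the quaternion case is killed by Corollary~\ref{cor-1}, which embeds $\mathfrak g$ into $\Der(\mathbb Q^{(+)})\simeq\mathfrak{su}(2)$. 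If you want to keep your $\overline K$-approach, you will need a different argument for the $\diamondsuit$/$\heartsuit$ step --- one that does not assume $A$, $B$ become abelian.
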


We give merely a sketch of the proof as, first, as noted in the introduction, 
this result is not new, and follows from computation of Hom-Lie structures on 
these algebras in \cite{chin2} and \cite{xie-liu} (see also 
\cite[Theorem 1]{homlie}), and, second, the proof -- modulo the results of 
\S\ref{sec-2} -- is fairly elementary. However, we want to demonstrate how one
can achieve such sort of results without elaborate computations of Hom-Lie 
structures. Also, in our approach the exceptional $3$-dimensional case emerges 
in a quite interesting way.

Since the Hom-Jacobi equation (\ref{hom-jac}) is linear in $\varphi$, for any
Lie algebra $L$ over a field $K$ we have
$$ 
\HomLie (L \otimes_K \overline K) \simeq \HomLie(L) \otimes_K \overline K .
$$
Further, since the condition (\ref{eq-jord}) of closedness of Hom-Lie structures
with respect to the anticommutator is bilinear in $\varphi, \psi$, a Lie 
$K$-algebra satisfies this condition if and only if the Lie 
$\overline K$-algebra $L \otimes_K \overline K$ does. This allows us to use
the ``Weyl's unitary trick'' (see, e.g., \cite[Chap. IV, \S 7]{jacobson}), and
reduce the proof to the case of compact Lie algebras.

Compact Lie algebras possess many peculiar properties (they are closed with
respect to subalgebras, and have no nilpotent subalgebras and elements), and it
is fairly easy to prove -- using, for example, induction by dimension -- that
if a simple compact Lie algebra satisfies one of the properties $\diamondsuit$, 
$\heartsuit$, then it is isomorphic to the $3$-dimensional algebra 
$\mathfrak{su}(2)$. Now, we cannot use Corollary \ref{cor-algcl}, as we are not
over an algebraically closed field, but over $\mathbb R$, but we can use 
similar reasonings valid in the real case. Namely, by Propositions~\ref{lemma-f}
and \ref{prop2}, if $\mathfrak g \not\simeq \mathfrak{su}(2)$, then 
$\HomLie(\mathfrak g)$ is a semisimple Jordan algebra in which every nonzero 
element is invertible. Then $\HomLie(\mathfrak g)$ does not have nontrivial
idempotents, and hence is simple. Inspection of the list of simple real Jordan 
algebras (available, for example, in \cite[Appendix A]{real}) reveals that the 
only simple Jordan algebras with the required property are either the 
$1$-dimensional algebra $\mathbb R$, or the $4$-dimensional plus algebra 
$\mathbb Q^{(+)}$ of the real quaternion division algebra $\mathbb Q$. In the 
latter case, by Corollary \ref{cor-1}, $\mathfrak g$ is isomorphic to a 
subalgebra of $\Der(\mathbb Q^{(+)}) \simeq \mathfrak{su}(2)$, and hence is 
isomorphic to $\mathfrak{su}(2)$.

\medskip

What can be said about Lie algebras satisfying the property $\diamondsuit$ or
$\heartsuit$ in general? It seems that the exact description in the general case
could be difficult. However, the condition $\diamondsuit$ can be interpreted
as a generalization of the following condition: a Lie algebra is a vector space
sum (not necessarily direct) of two abelian subalgebras. It is easy to prove 
that such Lie algebras are metabelian (see, for example, 
\cite[Proposition 1.5]{kolman}). There is a vast body of literature devoted to 
generalizations and extensions of this situation: what happens when we impose 
various restrictions on summands (nilpotency, simplicity, etc.); see, for 
example, \cite{burde-intro} and references therein. The property $\diamondsuit$
can be thought as a generalization in another direction: in the decomposition 
$L = A \oplus B$ we no longer assume $[A,A] = 0$ and $[B,B] = 0$, but impose the
weaker conditions $[[A,A],B] = 0$ and $[[B,B],A] = 0$, without assuming that $A$, $B$ are 
necessarily subalgebras.

The property $\heartsuit$ seems to be more tricky; we conjecture that it is 
related to existence of subalgebras of codimension $1$ (recall that a simple
finite-dimensional Lie algebra with a subalgebra of codimension $1$ is a form
either of $\mathfrak{sl}(2)$, or of Zassenhaus algebra, see \cite{elduque} and
references therein).

\begin{conjecture}
Let $L$ be a finite-dimensional Lie algebra over an algebraically closed field 
of characteristic $\ne 2$, and $Rad(L)$ its solvable radical.
\begin{enumerate}[\upshape(i)]
\item
If $L$ satisfies the property $\diamondsuit$, then $L/Rad(L)$ is isomorphic to 
the direct sum of several copies of $\mathfrak{sl}(2)$.

\item
If $L$ satisfies the property $\heartsuit$, then $L/Rad(L)$ is isomorphic to the
direct sum of several copies of $\mathfrak{sl}(2)$ and $W_1(n)$.
\end{enumerate}
\end{conjecture}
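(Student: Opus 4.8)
The plan is to push the statement down, in stages, to a classification problem for simple Lie algebras, in which the properties $\diamondsuit$ and $\heartsuit$ are shown to force a subalgebra of very small codimension, after which the classification of simple Lie algebras with a codimension-$1$ subalgebra recalled above applies.

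First, if $L$ is solvable then $L/Rad(L)=0$ and both assertions hold trivially (the empty direct sum), so we may assume $L$ is not solvable and put $S=L/Rad(L)$, with $\pi\colon L\to S$ the canonical surjection. The initial --- and, I expect, most delicate --- step is the descent from $L$ to $S$. A vector-space decomposition $L=A\oplus B$ gives $S=\pi(A)+\pi(B)$, and since $\pi$ is a homomorphism the relations $[[A,A],B]=0$, $[[B,B],A]=0$ survive as $[[\pi(A),\pi(A)],\pi(B)]=0$, $[[\pi(B),\pi(B)],\pi(A)]=0$; the difficulty is that $\pi(A)+\pi(B)$ need not be direct, and under $\heartsuit$ the inclusion $A\subseteq B$ need not descend. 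I would fix a Levi subalgebra $\mathfrak s\hookrightarrow L$ with $\pi|_{\mathfrak s}$ an isomorphism onto $S$, adjust $A$ and $B$ modulo $Rad(L)$ and relative to $\mathfrak s$, and aim to produce in $S$ two nonzero subspaces $A',B'$ with $\dim A'+\dim B'=\dim S$ satisfying $[[A',A'],B']=[[B',B'],A']=0$ --- that is, a common generalization of $\diamondsuit$ and $\heartsuit$ that also permits $A'\cap B'\neq 0$ without $A'\subseteq B'$. This hybrid configuration is what must be carried through the argument.

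Granting such a configuration on a semisimple $S$, the next step is reduction to the simple case. In characteristic zero $S=S_1\oplus\dots\oplus S_k$ with the $S_i$ simple, and projecting $A',B'$ onto each $S_i$ preserves both bracket relations and the equality $S_i=\pi_i(A')+\pi_i(B')$, so it suffices to treat a simple $S$. In characteristic $p$ one must also exclude the more general semisimple structure of $L/Rad(L)$ (Block's theorem); as the desired conclusion forces $S$ to be a direct sum of simples, this is naturally done by contradiction, again via dimension and centralizer estimates. For a simple $S$ the main identities are, in the $\diamondsuit$-like case, $[A',A']\subseteq\Cent_S(B')$ and $[B',B']\subseteq\Cent_S(A')$, whence $\Cent_S(A')\cap\Cent_S(B')=Z(S)=0$ (as $A'+B'=S$), so $[A',A']\cap[B',B']=0$; together with $S=[S,S]=[A',A']+[A',B']+[B',B']$ and with upper bounds for $\dim\Cent_S(V)$ in terms of $\mathrm{codim}_S V$ (coming from lower bounds on the rank of nonzero inner derivations, i.e.\ on the dimension of the minimal orbit), this forces $\dim A'$ and $\dim B'$ to be small and the bilinear map $B'\times B'\to S/\Cent_S(A')$ to be strongly degenerate; analysing it via the eigenspace decomposition of $\mathrm{ad}\,a$ for an appropriate semisimple $a$ --- whose centralizer acts faithfully on the sum of the nonzero eigenspaces --- should collapse everything to $S\cong\mathfrak{sl}(2)$. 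In the $\heartsuit$-like case, when $\dim A'=1$ the subspace $B'$ has codimension $1$, and $[[B',B'],A']=0$ plus simplicity should be massaged into showing that $B'$, or a closely related codimension-$1$ subspace, is a subalgebra; the quoted result then yields $S\cong\mathfrak{sl}(2)$ or $S\cong W_1(n)$, and the case $\dim A'>1$ I would reduce to $\dim A'=1$ by replacing $A'$ with a hyperplane and enlarging $B'$ correspondingly, or by passing to a suitable parabolic. Finally, recalling that $W_1(n)$ does not satisfy $\diamondsuit$ keeps the $\diamondsuit$ branch from producing Witt factors, matching (i), and one reassembles the simple factors.

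Two steps I expect to be the real obstacles. First, the descent to $S$ together with the bookkeeping of the hybrid configurations: done carelessly this could both lose information and admit spurious factors. Second --- and harder --- the simple modular case: for $p>3$ one must confront the Block--Wilson--Strade--Premet classification and rule out all Cartan-type simple Lie algebras $W$, $S$, $H$, $K$ of rank $>1$ (and, for $p=5$, the Melikian algebras) as carriers of $\diamondsuit$ or $\heartsuit$; the author already notes that even generalized Witt algebras over fields of positive characteristic ``seem to be more tricky,'' and I would expect the higher-rank Cartan type case to require a case-by-case analysis via the natural filtration and the Recognition Theorem rather than a single uniform estimate, with characteristic $3$ needing separate care. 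In characteristic zero, where there are no Witt factors and the list of simple Lie algebras is short, I would expect the scheme above to go through with comparatively modest effort.
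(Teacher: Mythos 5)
The statement you are trying to prove is posed in the paper as an open conjecture: the author gives no proof of it at all, so there is no argument of the paper to compare yours against, and your text must be judged on its own. On those terms it is a research programme rather than a proof: every decisive step --- the descent to $L/Rad(L)$, the centralizer/minimal-orbit estimates for a simple quotient, the reduction of the $\heartsuit$-case to a codimension-one subalgebra, and the exclusion of the Cartan-type and Melikian algebras in positive characteristic --- is announced (``I would'', ``should collapse'') but not carried out. The modular simple case, which you rightly single out as the hard one, is left completely open; note that the author cannot even decide whether the generalized Witt algebras $W_G$ fail $\diamondsuit$ in positive characteristic, so there is no reason to expect that part to be routine.

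More importantly, your first reduction cannot work as described, and this is not a matter of bookkeeping. The properties $\diamondsuit$ and $\heartsuit$ do not constrain $L/Rad(L)$ at all: if $A$ lies inside the radical, the projected configuration in $S=L/Rad(L)$ is vacuous, and no ``adjustment modulo $Rad(L)$'' can recreate it, because the conclusion itself fails for such $L$. Concretely, let $\mathfrak g$ be any simple Lie algebra and $L = \mathfrak g \oplus Kz$ with $z$ central. Taking $A = Kz$ and $B = \mathfrak g$ gives $\diamondsuit$ (we have $[[A,A],B]=0$ since $A$ is abelian, and $[[B,B],A]=0$ since $A$ is central); taking $A = Kz$ and $B = Kz \oplus H$ with $H$ a hyperplane of $\mathfrak g$ gives $\heartsuit$ for the same reasons, with $\dim A + \dim B = \dim L$. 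Yet $L/Rad(L) \simeq \mathfrak g$ is arbitrary, e.g. $\mathfrak{sl}(3)$ over an algebraically closed field of characteristic zero. So the conjecture as literally stated fails for non-semisimple $L$, and any proof along your lines would have to begin by isolating additional hypotheses (centerless, perfect, semisimple, or a reformulation of $\diamondsuit$/$\heartsuit$ relative to the radical) under which a descent is even meaningful; your sketch does not identify them. A smaller defect of the same kind: in the $\heartsuit$-case with $\dim A' > 1$ you propose to replace $A'$ by a hyperplane ``and enlarge $B'$ correspondingly'', but enlarging $B'$ can destroy both conditions $[[A',A'],B'] = 0$ and $[[B',B'],A'] = 0$, so that reduction is unjustified as well.
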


As the properties $\diamondsuit$ and $\heartsuit$ are preserved under field 
extensions, the condition that the ground field is algebraically closed is 
immaterial here, and is added merely to avoid cumbersome formulations related to
forms of not necessarily central semisimple algebras.

\section{When Hom-Lie structures form a Hom-Jordan algebra?}\label{sec-hom-jord}

Here we briefly discuss the question posed by Sergei Silvestrov: if we are
dealing with Hom-algebras, wouldn't it be natural to replace in the question we
dealt with in this paper, ``Jordan'' by ``Hom-Jordan''? To properly interpret 
this question, we should replace the ordinary anticommutator (\ref{eq-jord}) in
the associative algebra $\End(L)$ of all linear maps of $L$ as a vector 
space, by its Hom-version; and for this, we need the notion of both 
Hom-associative and Hom-Jordan algebra.

According to the general idea, to get Hom-versions of identities from their 
standard counterparts, one should to ``twist'' them by a linear map, similarly 
how the identity (\ref{hom-jac}) is obtained from the Jacobi identity. In this 
way, a \emph{Hom-associative algebra} is an algebra $A$ with a binary multiplication 
$\>\cdot\>$, and a twisting linear map $\varphi: A \to A$, satisfying the 
Hom-version of the associative identity:
$$
(x \cdot y) \cdot \varphi(z) = \varphi(x) \cdot (y \cdot z) .
$$

As for \emph{Hom-Jordan algebras}, there are two versions of them in the 
literature: Makhlouf in \cite{makhlouf} defines them as commutative algebras
satisfying the identity
$$
(y \cdot x^2) \cdot \varphi^2(x) = (y \cdot \varphi(x)) \cdot \varphi(x^2) ,
$$
while Yau in \cite{yau} defines them as commutative algebras satisfying the 
identity
$$
(\varphi(y) \cdot x^2 ) \cdot \varphi^2(x) = 
(\varphi(y) \cdot \varphi(x)) \cdot \varphi(x^2) .
$$

These two definitions are different, and Yau argues, not without reason, that 
his definition is more ``correct'', as the plus algebra of a Hom-alternative 
algebra (whatever it is), is always Hom-Jordan in his sense, but not in 
Makhlouf's sense. For us, however, this difference is immaterial, as we are
concerned here exclusively with Hom-Jordan algebras of the form $A^{(+)}$ for 
Hom-\emph{associative} algebras $A$. As shown respectively in \cite{makhlouf} 
and \cite{yau} (and is easy to see), the plus algebra of a Hom-associative 
algebra is Hom-Jordan in both senses.

Now, as explained in \cite[\S 2]{ado-homlie}, the proper Hom-analog of the 
associative algebra $\End(V)$ of all linear maps on the vector space $V$, is the
Hom-associative algebra $\End(V)_\alpha$, with multiplication 
$$
\varphi \cdot_\alpha \psi = 
\alpha^{-1} \circ \varphi \circ \alpha \circ \psi \circ \alpha
$$
and the twisting map 
$\Ad_\alpha: \varphi \mapsto \alpha^{-1} \circ \varphi \circ \alpha$, where
$\varphi, \psi \in \End(V)$, and $\alpha: V \to V$ is a fixed invertible linear
map. Thus, the multiplication in the Hom-Jordan algebra $\End(V)_\alpha^{(+)}$ 
is defined by the anticommutator
\begin{equation}\label{eq-chi}
\varphi *_\alpha \psi = 
\frac 12 
\alpha^{-1} \circ (\varphi \circ \alpha \circ \psi + 
                   \psi \circ \alpha \circ \varphi) \circ \alpha
= 
\frac 12 \Ad_\alpha (\varphi \circ \alpha \circ \psi 
                   + \psi \circ \alpha \circ \varphi) .
\end{equation}

Therefore, the question can be formulated as follows: for which Lie algebras
$L$ and an invertible linear map $\alpha: L \to L$, for any two Hom-Lie 
structures $\varphi, \psi \in \HomLie(L)$, their $\alpha$-``twisted'' 
anticommutator, as defined in (\ref{eq-chi}), is also a Hom-Lie structure on 
$L$?

A particular, but, perhaps, more attractive variant of this question assumes 
$\alpha \in \Aut(L)$. In this case, by a result from \cite[\S 1]{homlie}, 
already mentioned in the proof of Proposition \ref{prop-hom}, $\HomLie(L)$ is
invariant under $\Ad_\alpha$, and hence $\Ad_\alpha$ in the formula 
(\ref{eq-chi}) can be dropped. Thus this version of the question reads: for 
which Lie algebras $L$ and $\alpha\in \Aut(L)$, for any two Hom-Lie 
structures $\varphi, \psi \in \HomLie(L)$, the map
$$
\frac 12 (\varphi \circ \alpha \circ \psi + \psi \circ \alpha \circ \varphi)
$$
is also a Hom-Lie structure on $L$? Note that this imposes a strong restriction 
on the automorphism $\alpha$: in particular, it itself should be a Hom-Lie 
structure on $L$.

\section*{Acknowledgement}

Thanks are due to Sergei Silvestrov for interesting discussions. This work was 
supported by the grant AP08855944 of the Ministry of Education and Science of 
the Republic of Kazakhstan.

\end{document}